\let\nc\newcommand
\nc{\la}{\label}
\newtheorem{theorem}{Theorem}[section]
\newtheorem{corollary}[theorem]{Corollary}
\newtheorem{lemma}[theorem]{Lemma}
\newtheorem{proposition}[theorem]{Proposition}
\theoremstyle{definition}
\newtheorem{definition}[theorem]{Definition}
\newtheorem{example}[theorem]{Example}
\newtheorem{remark}[theorem]{Remark}
\newtheorem*{problem}{Problem}
\def\k{\mathsf k}
\def\C{\mathbb C}
\newcommand{\Frac}{{\rm{Frac}}}
\newcommand{\Autk}{{\rm{Aut}}}
\newcommand{\into}{\,\,\hookrightarrow\,\,}
\newcommand{\onto}{\,\,\twoheadrightarrow\,\,}
\def\k{\mathsf k}
\newcommand{\GK}{{\rm GK}}
\newcommand{\cfs}{\operatorname{cfs}}
\def\M{\mathcal M}
\def\L{\mathcal L}
\def\K{\mathcal K}
\def\Z{\mathbb Z}
\def\m{\mathsf m}
\def\n{\mathsf n}
\newenvironment{dedication}
  {
   \itshape             
   \raggedleft          
  }
\begin{document}
\selectlanguage{english}
\title{Harish-Chandra modules and Galois orders revisited}

\author{Jo\~ao Schwarz}
\address{Shenzehn International Center for Mathematics, SUSTech, 1088 Xueyuan Avenue, Shenzhen 518055,
P.R. China}
\email{jfschwarz@hotmail.com}

\subjclass[2020]{Primary: 16G99 16P40  16W22 16S32 Secondary: 16S85 14F10}
\keywords{Galois order, generalized Weyl algebra, ring of differential operator, Harish-Chandra Module, quasicommutative algebra, Gelfand-Kirillov Hypothesis}

\maketitle

\begin{dedication}
    {\color{purple} To Pan for all the smiles that she gives me.} 
\end{dedication}

\medskip

\medskip

\medskip

\begin{abstract}
    The main subject of study of this paper are general properties of Harish-Chandra algebras and modules with respect to a pair of algebra and subalgebra, with special focus on the transfer of properties to a ``spherical subalgebra". We also discuss general properties of Galois rings and algebras, which are a specialization of the previous notion, i.e., when we have an embedding in a skew group ring, and we obtain an important link between different approaches to it in the literature. Then we focus our study into finite multiplicative invariants on the ring of differential operators on the torus and fixed rings under the action of a finite group of algebra automorphisms of generalized Weyl algebras. We study freeness over the Harish-Chandra subalgebra and the Gelfand-Kirillov Conjecture  for them. Our last section constructs some concrete irreducible Harish-Chandra modules and show that they are holonomic. This paper also introduces the notion of an infinite rank generalized Weyl algebra.

\end{abstract}

\section{Introduction}

The idea of, given a pair algebra $U$ and subalgebra $\Gamma$, to study the induction and restriction functors between their categories of modules is very important. In Lie theory a very important case of this is the study of the so called \emph{Harish-Chandra modules} (cf. \cite[Chapter 9]{Dixmier}, \cite[Kapitel 6]{Jantzen}): $\mathfrak{g}$ is a finite dimensional Lie algebra and $\mathfrak{t}$ a reductive Lie subalgebra of it; $U=U(\mathfrak{g})$, $\Gamma=U(\mathfrak{l})$, and $M$ is a Harish-Chandra $\mathfrak{g}$-module for this pair if it is a finitely generated $U$-module whose restriction to $\Gamma$ is a direct sum of simple finite dimensional modules. In case $\mathfrak{g}$ is semisimple and $\mathfrak{l}=\mathfrak{h}$ is the Cartan subalgebra, we have in particular the theory of (generalized) weight modules, and the important BGG category $\mathcal{O}$ (see, e.g., \cite{Humphreys}).

Also in Lie theory, based in  the Gelfand-Tsetlin theory for $\mathfrak{gl}_n$ (cf. \cite{Zhelobenko}), another signficant pair was introduced in \cite{DFO0}, where $U=U(\mathfrak{gl}_n)$ and $\Gamma=\mathcal{Z}$ is the famous Gelfand-Tsetlin subalgebra, and finitely generated $U$-modules which decompose as a direct sum of generalized weight spaces with respect to $\mathcal{Z}$ the modules are the so called Gelfand-Tsetlin modules. Variations of the theme of Harish-Chandra modules are central to representation theory (cf. \cite{Mathieu}, \cite{Billig}, for instance).

A landmark in  generalized Lie theory was the introduction in \cite{DFO} of an abstract theory of Harish-Chandra algebras and modules, unifying classical Harish-Chandra modules \cite{Dixmier} \cite{Jantzen}, Gelfand-Tsetlin modules for $\mathfrak{gl}_n$ \cite{DFO0}, and generalized weight modules for generalized Weyl algebras \cite{Bavula} \cite{DGO}. In the case of $\mathfrak{gl}_n$, this is the largest category of modules for which there is a good understanding of simple objects (\cite{lots}) and which has lead to many developments in the past 30 years (cf. \cite{Proceedings}). An important recent contribution to this abstract theory is the work \cite{Fillmore}, which combines ideas from \cite{DFO} as well \cite{LM}, and has no assumption on the base field unlike classical cases where one usually assumes characteristic 0.


As a continuation of this theme, the theory of Galois algebras and orders was developed by V. Futorny and S. Ovsienko in \cite{FO} and \cite{FO2} is a refinement of these ideas, where it is exploited the embedding of these algebras in a skew group ring (a idea that goes back to the pioneering work of Block \cite{Block} in the classification of all the irreducible modules for $\mathfrak{sl}_2$ and the first Weyl algebra.). This theory has been successfully applied in the representation theory of many algebras, such as finite $W$-algebras of type $A$ \cite{FMO}, invariants of certain rings of differential operators \cite{FS0} \cite{FS3}, and invariants of certain quantum groups \cite{FS2}, OGZ-algebras, their q-analogues, $U_q(\mathfrak{gl}_n)$ and their parabolic versions \cite{Hartwig}, where the important notions of principal and rational Galois orders were introduced, and the alternating analogue of $U(\mathfrak{gl}_n)$ \cite{Jauch}. In \cite{Webster} an important variation of the theme, flag orders, were introduced, and a crucial link was established with certain objects arising for certain quantum field theories, namely Coulomb branches of $3 D \, \mathcal{N}=4$ gauge theories\footnote{Or, to be more precise, those objects and their 'flavour' deformations, the flavour space given by an algebraic torus \cite{BFN}.}, recently given a rigorous mathematical definition in \cite{BFN}, and their quantizations, called (following the terminology in \cite{lots}) spherical Coulomb branch algebras. More precisely, all of them are Galois orders \cite{Webster}. This development lead to breakthroughs in representation theory of many algebras, and these new techniques were able to settle longstanding conjectures, such as the freeness of the finite $W$-algebras of type $A$ over their Gelfand-Tsetlin subalgebras \cite{WWY} (which was conjectured in \cite{FMO} and proven there in some particular cases); at the same time, it shed new light into classical objects: for instance, the algebras in \cite{MvdB} are now known to be spherical Coulomb branch algebras of abelian gauge theories (cf. \cite{lots}). Further developments of the theory of Galois orders appear in \cite{FGRZ}, \cite{MV}, \cite{Jauch2}, \cite{Hartwig2}, \cite{Hartwig3} and \cite{Fillmore}. We remark that in this context, Harsish-Chandra modules are usually called Gelfand-Tsetlin modules (cf. \cite{FO2}, \cite{FGRZ}, \cite{lots}). We stick, however, with the terminology of \cite{DFO}.

This paper started as a natural continuation of \cite{FS0}, \cite{FS2}, \cite{FS3}, where using the realization of certain algebras as fixed rings of certain generalized Weyl algebras, many rings were shown to be Galois orders. However, the scope of this paper became far more general. Here is a description of its contents.

In the second section we make a detailed introduction of what are going to be the main objects of study in this work: Harish-Chandra algebras and modules, Galois rings and orders, and generalized Weyl algebras. We point out a connection with a previously studied class of algebras in ring theory: the so called FCR algebras \cite{KS}. We also include the relevant results in noncommutative invariant theory, and a detailed discussion of Morita theory, for it is going to play a very important role in what follows. Along the way, and in fact in the whole paper, we include the proof of many well known or folkloric results in the area that have never been published before or stated explictly in our generality (in particular, Proposition \ref{folklore}). 

The third section is a discussion about general Harish-Chandra algebras and modules, in the setting of \cite{DFO}\cite{Fillmore}, and contains the most important results of this paper. We begin by considering twisted generalized Weyl algebras and introduce infinite rank generalized Weyl algebras. Then we show new cases of quasicommutative and Harish-Chandra subalgebras, as well as pointing out that many interesting cases of modules in the literature lies in the framework of \cite{DFO}.

Then we show a series of results relating the category of Harish-Chandra modules with respect to $U, \Lambda$ and the one with respect to its ``spherical counterparts" $eUe, e\Lambda e$, where $e$ is an idempotent of $U$, specially in the important case where $U$ contains the group algebra $\k W$ of a finite group $W$ automorphisms of $U$ with $W(\Lambda) \subset \Lambda$, $|W|^{-1} \in \k$, and the idempotent is $e=1/|W| \sum_{w \in W} w$, with $\Lambda$ an affine and Noetherian algebra, a theme explored thoroughly in case of Galois orders and Hopf Galois orders in \cite{Webster} and \cite{Hartwig2}, respectively, and which is very useful in general (cf. \cite[Introduction]{EG}). We begin with a purely ring theoretical and somewhat surprising result that shows that quasicommutativity (\cite{DFO}) is a Morita invariant (Theore \ref{qc-is-Morita}). Then our main results are that if $\Lambda$ is quasicommutative, then so is $\Lambda^W$ (Theorem \ref{invariants-quasicommutative}), in the non-modular case, and we show the first natural examples since the pioneering work \cite{DFO} of a noncommutative Harish-Chandra subalgebra $\Gamma$ (Corollary \ref{mostrar}). The main result of this paper is that, under certain modest hypothesis, the categories of Harish-Chandra modules $\mathbb{H}(U,\Lambda)$ and $\mathbb{H}(eUe, \Lambda^W)$ are equivalent (Theorem \ref{Morita-HC}), generalizing results of \cite{Webster}. As purely abstract version of this resulting, mentioning only idempotents is also stated (Theorem \ref{abstract2}). We also show that if $U \supset \Lambda \supset \Gamma$ with $\Lambda, \Gamma$ quasicommutative and $\Lambda$ a finite $\Gamma$-module, then $\Lambda$ is a Harish-Chandra subalgebra if and only if $\Gamma$ is (Theorem \ref{remarkable}). 

Along the way, we show an interesting connection between a certain Harish-Chandra category and the (full) category $\mathcal{O}_{H_\mathfrak{c}}$ for the rational Cherednik algebras introduced in \cite{Berest} (see also \cite{Ginzburg2}; do not confuse with its 0th block, also called category $\mathcal{O}$ in \cite{GGOR} and many subsequent papers), in Theorem \ref{O-H-C}. 

Regarding Galois orders, we then apply our results to their theory in section four, again generalizing some results of \cite{Webster} (Theorem
\ref{non-principal-flags}). We give a concrete example that shows that not every pair algebra/Harish-Chandra subalgebra can be realized as a Galois ring. then, we show that if $\Gamma$ is a Notherian integrally closed Harish-Chandra subalgebra of $U$, then the frameworks for Galois orders in the original work of V. Futorny and S. Ovisenko \cite{FO}\cite{FO2} coincides with the framwork of J. T. Hartwig in \cite{Hartwig}. This is remarkable as the condition depends only on $\Gamma$, and it shows that if the obvious necessary conditions for an equivalence definitions hold, then indeed the equivalence holds.

Then we move to more specific topics. We discuss invariants under the action of a finite group of the ring of differential operators on the torus, considered previously in \cite{FO}, \cite{EFOS} and \cite{FS3}. We show that under a very general condition, they constitute a principal Galois order (Theorem \ref{multiplicative-Galois}). We also generalize the Gelfand-Kirillov Conjecture for them, discussed first in \cite[Theorem 3]{EFOS} and generalized in Theorem \ref{GK-1}. We point how that this particular result is related to a well described connection between Noether's problem and noncommutative Noether's problem (introduced in \cite{AD}), which was explored in \cite{FS} and \cite{SchwarzPan}, but is in some sense somewhat different in this case.

Then we move on to show that most results in \cite{FS3} can be generalized to the tensor product of a single rank 1 generalized Weyl algebra, when the group $G$ in the usual definition of Galois ring (cf. Definition \ref{Galois-ring}) is either (a) a permutation group (Theorem \ref{permutation}) or (b) a complex reflection group of the type $G(m,p,n)$ (Theorem \ref{main-objective}). In particular, we show that the invariants of the rank $n$ Weyl algebra $W_n(\k)^{G(m,p,n)}$ are rational Galois orders, the main unfinished goal of \cite{FS3}. This has been shown previously in \cite{LW}, in the case of groups $G(m,p,n)$ with a fairly sophisticated technique; our proof follows the technique of GWAs and is completely elementary. 
We also show that the Gelfand-Kirillov Hypothesis and its q-analogue holds for them (Theorem \ref{GK-2}).

In the fifth and last section we show that the usual modules in the category $\mathcal{O}$ for $W_n(\k)^G$, the fixed ring of the Weyl algebra $W_n(\k)$ under the action of complex reflection groups of type $G(m,p,n)$, are Harish-Chandra modules (this again in discussed implictly in \cite{LW}, but rely heavily on the theory of Coulomb branches \cite{BFN}. Our approach is, on the other hand, elementary). We also show that such modules are holonomic in an elementary way, simplifying some arguments in \cite{FS4}. One remarkable consequence is that modules in category $\mathcal{O}$ are Harish-Chandra with respect to two very different Harish-Chandra subalgebras ($\C[h^*]^W$ and $\Gamma=\C[t_1,\ldots,t_n]^W)$. We also adapt ideas from \cite{DFO} and \cite{DGO} to construct generic simple Harish-Chandra modules for certain invariants of generalized Weyl algebras under the action of a permutation group. A discussion of holonomic modules is also made, and is used in a small application constructing new holonomic modules for the ring of differential operators on the singular variety $\k^n/\mathcal{A}_n$.

\medskip

\textbf{Conventions} All modules will be left modules unless stated otherwise; we call an algebra Noetherian if it is both left and right Noetherian; the same with Artinian. $\otimes$, unadorned, means $\otimes_\k$, where $\k$ is the base field of the algebras under discussion.

\section{Preliminaries}

\subsection{Harish-Chandra subalgebras and Harish-Chandra modules I}

In this section the base field $\k$ can be completely arbitrary, following the results from \cite{Fillmore}, which generalize the foundational paper \cite{DFO}.

Let $U$ be an associative algebra and $\Gamma$ a subalgebra. We denote by $\cfs \, \Gamma$ its cofinite spectrum: that is, the set of maximal ideals $\m$ such that $\Gamma/\m$ is finite dimensional. If $\m$ is such an ideal, then $\Gamma/\m$ , by Wedderburn-Artin Theory, is a matrix ring over a finite dimensional division $\k$-algebra, and hence has a unique simple module $\mathsf{S}_\m$.

\begin{definition}\cite{DFO}\cite{Fillmore}
A finitely generated $\Gamma$-module $M$ is called a \emph{generalized weight module if} 

\[ M= \bigoplus_{\n \in \cfs \Gamma} M(\n), \]

where $M(\n)$ is the set of elements $x \in M$ such that $\n^{n(x)} x = 0$, for some $n(x) \in \Z_{>0}$. It is a \emph{uniform} generalized weight module if, for each $\n \in \cfs \Gamma$ such that $M(\n)\neq \{ 0 \}$, there is an $n \in \mathbb{Z}_{>0}$ such that $\n^n M(\n)=0$.\footnote{This is a reformulation of the notion introduced in \cite{Fillmore} of strong Harish-Chandra block module in case the equivalence relation is simply equality.}
\end{definition}

We call the support of $M$, $\operatorname{Supp} \, M$, the set of $\n \in \cfs \, \Gamma$ such that $M(\n) \neq 0$. In case all those generalized weight spaces are finite dimensional (such cases are discussed in \cite{DFO}, \cite{FO2}, \cite{Fillmore}), we call the Gelfand-Tsetlin multiplicity of $\n$ the value $\operatorname{dim}_\k \, M(\n)$.

In \cite{DFO}, two essential properties of $\Gamma$ were considered: being \emph{quasicentral} and \emph{quasicommutative}.

\begin{definition}\cite{DFO}
    $\Gamma$ is called quasicentral in $U$ if, for every $u \in U$, the $\Gamma$-bimodule $\Gamma u \Gamma$ is a finitely generated left and right $\Gamma$-module.
\end{definition}

\begin{definition}\cite{DFO}
$\Gamma$ is quasicommutative if, given distinct $\m, \n \in \cfs(\Gamma)$, $\operatorname{Ext}^1(\mathsf{S}_\m, \mathsf{S}_\n)=0$.
\end{definition}

Note that being quasicommutative is a property of $\Gamma$ alone, while being quasicentral is a property of the pair $(U, \Gamma)$.

The following proposition gives us equivalent and useful definitions of quasicommutativity.

\begin{proposition}\label{Fillmore}
$\Gamma$ is quasicommutative if and only if every finite dimensional $\Gamma$-module $M$ is a generalized weight module. In case $\Gamma$ is affine Noetherian, this is the same as to require that $\m \n = \n \m = \n \cap \m$ for $\n, \m \in \cfs \Gamma, \n \neq \m$.
\end{proposition}
\begin{proof}
    The first claim is \cite[Proposition 2.11]{Fillmore}. The second one \cite[Proposition 4]{DFO}.
\end{proof}

Of course the name deserves an explanation: we need to show that every commutative algebra is quasicommutative. The proof is simple but not a triviality, so we include it here for the sake of completeness.

\begin{proof}
    First note that $M$ is an $\Gamma/\operatorname{Ann}(M)$-module, the later which is a finite dimensional commutative algebra. Hence there is no loss in generality assuming $\Gamma$ finite dimensional. Hence, by standard commutative algebra, $\Gamma$ is Artinian and hence has a finite number of distinct maximal ideals $\m_1, \ldots, \m_s$ such that $(\m_1 \m_2 \ldots \m_s)^N=0$, for a $N>>0$. By the Chinese remainder theorem, $\Gamma \simeq \Gamma/\m_1^N \times \ldots \times \Gamma/\m_s^N$, and hence cleary $V$ decomposes as a direct sum of generalized weight spaces.
\end{proof}

The following Lemma is straightforward, having the same proof using linear algebra from the case of Lie algebras (cf. \cite{Dixmier}), and will be used without comment.

\begin{lemma}\label{basic-lemma}
    If a finitely generated $\Gamma$-module $M$ can be written as $M=\sum_{\n \in \cfs \, \Gamma} M(\n)$, then the sum is actually direct. All submodules and all factor modules of a generalized weight module are generalized weight modules.
\end{lemma}

We also remark that the notion of quasicommutative algebras is closely related to the notion of FCR algebras (\textbf{F}inite dimensional modules are \textbf{C}ompletely  \textbf{R}eductive), due to H. P. Kraft and L. W. Small \cite{KS}, and studied in \cite{KS2}, \cite{KSW}, \cite{FLS}, among others. These are algebras $A$ that have a) The reductive property: every finite dimensional module $V$ is completely reducible; and have b) The residually finite property: the intersection of the anihilators of all simple finite dimensional modules is 0. In fact we only need property a) to obtain quasicommutative algebras; for the lack of a better name, we will call algebras that satisfy a) above provisionally 'finite dimensional reductive' (FDR).

\begin{proposition}
    Every FDR algebra $A$ is quasicommutative.
\end{proposition}
\begin{proof}
    Let $V$ be a finite dimensional module. Then $V= \bigoplus_{i=1}^s V_i$, and each $V_i$ is an irreducible $A$-module. $\mathsf{Ann}(V_i)$ is a maximal ideal $\m_i$ of finite codimension in $A$, and $\m_i V_i=0$. Hence $V$ is a direct sum of generalized weight modules, and hence quasicommutative.
\end{proof}

\begin{example}\cite[Example 9]{DFO}
    Every enveloping algebra $U(\mathfrak{g})$ of a semisimple Lie algebra over a field of zero characteristic is an FCR algebra due to H. Weyl complete reducibility theorem and an important theorem of Harish-Chandra (\cite[2.5.7]{Dixmier}), and so is quasicommutative. The same holds if $\mathfrak{g}$ is reductive and $\k=\bar{\k}$ (cf. \cite[Corollary 1.7]{FLS}).
\end{example}

\begin{example}
    More examples of FCR - and hence, quasicommutative algebras - are $U_q(\mathfrak{g})$, for a semisimple Lie algebra $\mathfrak{g}$ over an algebraically closed field of zero characteristic when $q$ is a non-root of unity; certain generalized Weyl algebras such as down-up algebras; and the enveloping algbera of the complex finite dimensional Lie superalgebra $\mathfrak{osp}(1,2n)$, $n \geq 1$. Details for all these results can be found on \cite{KS2}. We also mention that invariants of the Weyl algebras under the action of tori are FCR \cite{MvdB}.
\end{example}

Not all quasicommutative algebra, however, is FCC (or FDR):

\begin{proposition}
    Let $\mathfrak{g}$ be a nilpotent Lie algebra over a field of zero characteristic. Then it is quasicommutative (cf. \cite[Example 9]{DFO}) but not necessarily FDR (and hence FCR).
\end{proposition}
\begin{proof}
By \cite[Lemma 2.1, Lemma 2.2]{Fillmore}. it suffices to show that, if $M, N$ are two finite dimensional irreducible modules for $\mathfrak{g}$, then $\operatorname{Ext}_{U(\mathfrak{g})}^1(M,N)=0$. As the Lie algebra is nilpotent this result is trivial since every irreducible finite dimensional module is the trivial module $\k$. On the other hand, if as our Lie algebra we consider $\mathfrak{n}$ the Lie algebra of all strictly upper triangular $n \times n$, the natural representation shows that $U(\mathfrak{n})$ is not FDR (and hence not FRC as well)
\end{proof}

The exact difference between FCR and quasicommutative algebras is that, in the later case it is required that $\operatorname{Ext}^1(S_\m, S_\n)=0$ for different elements of cofinite spectrum, while that in the case the FCR algebras, it is necessary that $\operatorname{Ext}^1(S_\m, S_\n)=0$ even if $\m=\n$ (compare Proposition \ref{Fillmore} with \cite[Theorem 3.1]{KSW}).

A simple but important remark, related to these matters, is the following

\begin{lemma}\label{maximal-free}
Let $A$ be an affine Noetherian algebra, $B$ another affine Noetherian algebra containing $A$ and such that $B$ is a free $A$-module. Then for each $\m \in \cfs \, A$ there exists a $\mathfrak{M} \in \cfs \, B$ with $\mathfrak{M} \cap A = \m$.
\end{lemma}
\begin{proof}
    Let $I$ be an index set such that, as $A$-module, $B= \sum_{i \in I} A_i$, where each $A_i \simeq A$. Choose $j \in I$. Let $\mathfrak{M}=\{ \sum_{i \in I} a_i| a_j \in \m \}$. It is not difficult to check that $\mathfrak{M}$ is the desired maximal ideal of $B$ with finite codimension.
\end{proof}



\begin{definition}\cite{DFO}
    Given a pair algebra $U$ and subalgebra $\Gamma$, then the latter is called a Harish-Chandra subalgebra if it is both quasicommutative and quasicentral. In this case, a generalized weight module with respect to $\Gamma$ is called a Harish-Chandra for the pair $(U, \Gamma)$. If the module is uniform generalized wight module, we call it uniform Harish-Chandra module.
\end{definition}

The class of Harish-Chandra modules is denoted by $\mathbb{H}(U, \Gamma)$, and it is an abelian extensionally closed full subcategory of $U-\operatorname{Mod}$. The full abelian subcategory of uniform Harish-Chandra modules is denoted $\mathfrak{H}(U, \Gamma)$.

The following proposition has been used in the case $\Gamma$ is commutative (e.g. \cite{FGRZ}, \cite{Hartwig}), but it is the first time that it is stated in this generality.

\begin{proposition}\label{folklore}
    Let $U$ be an associative algebra and $\Gamma$ a Noetherian Harish-Chandra subalgebra (not necessarily commutative). Given a finitely generated $U$-module $M$, it is Harish-Chandra if, and only if, $\Gamma$ acts locally finitely: that is, for each $v \in M$, $\Gamma.v$ is finite dimensional.
\end{proposition}
\begin{proof}
    Suppose $M$ is a Harish-Chandra module. Let $v \in M$. Then we can write $v=\sum v_{\m_i}$, where each $v_{\m_i}$ is anihilated by $\m_i^{n_i}$, $n_i \in \mathbb{Z}_{>0}$. Since $v$ is clearly annihilated by $\bigcap \m_i^{n_i}$, and by an easy induction argument we have that this ideal has cofinite dimension in $\Gamma$ (see next Lemma), we have that $\Gamma.v= \Gamma/(\bigcap \m_i^{n_i}).v$ is finite dimensional. Suppose now that for each $v \in M$, $\Gamma.v$ is finite dimensional. Then $\Gamma.v$ is a generalized weight module for $\Gamma$. So $M$ is a sum of generalized weight spaces, which, by Lemma \ref{basic-lemma}, must in fact be a direct sum.
\end{proof}

\begin{lemma}
    Let $A$ be a Noetherian algebra and $\m_1, \ldots, \m_s$ be different elements of $\cfs \, A$, with $n_i \in \mathbb{Z}_{>0}$, $i=1,...,s$. Then $\operatorname{dim} A/(\bigcap_{i=1}^s \m_i^{n_i}) < \infty$.
\end{lemma}
\begin{proof}
    Let $\m \in \cfs \, A$. It is enough to prove that, for each $N>0$, $\m^N$ has finite codimension; and that $M, M'$ are ideals of finite codimension, so is $M \cap M'$. As $A$ is Noetherian, for each $i>0$, $\m^i$ is a finitely generated ideal of $A$, and hence $\m^i/\m^{i+1}$ a finitely generated $A/\m$-module. Hence $\m^i/\m^{i+1}$ is finite dimensional. Assume $A/\m^N$ finite dimensional. We have a short exact sequence

    \[ 0 \rightarrow \m^N/m^{N+1} \rightarrow A/\m^{N+1} \rightarrow A/\m^N \rightarrow 0\]
    
    Since dimension is additive, $\m^{N+1}$ will have finite codimension as well.

    If $M, M'$ are ideals of finite codimension, then $A/(M \cap M') \into A/M \times A/N$. The codomain is a finite dimensional algebra;  hence so is its subalgebra $A/(M \cap M')$.
\end{proof}

\begin{example}
    If $\mathfrak{g}$ is a finite dimensional complex Lie algebra and $\mathfrak{l}$ is a reductive subalgebra on it, with $\k$ algebraically closed and 0 characateristic, then $U(\mathfrak{l})$ is a Harish-Chandra subalgebra of $U(\mathfrak{g})$, and this is way we recover the classical theory of Harish-Chandra modules, e.g., \cite[Chapter 9]{Dixmier} \cite[Kapitel 6]{Jantzen} (use the previous Proposition).
\end{example}

\begin{example}
    If $U=U(\mathfrak{gl}_n)$ and $\Gamma$ the Gelfand-Tsetlin subalgebra, then the Harish-Chandra modules are the so called Gelfand-Tsetlin modules, introduced in \cite{DFO0}\cite{DFO}. This was later generalized for $U(\mathfrak{so}_n)$ \cite{Mazorchuk} \cite{Disch} and to finite $W$-algebras of type $A$ \cite{FMO}.
\end{example}

\begin{example}
    If $\k=\C$ and $U=U_q(\mathfrak{gl}_n)$ with generic $q \in \C$ and $\Gamma=\mathcal{Z}$ the Gelfand-Tsetlin subalgebra \cite{MT2} \cite{Hartwig}. This later was generalized to $U'_q(\mathfrak{so}_n)$, the nonstandard q-deformed enveloping algebra of Gavrilik and Klimyk \cite{Disch}.
\end{example}

For details of classical Gelfand-Tsetlin theory see \cite{Zhelobenko} \cite{Molev}. Similarly, we have

\begin{example}
    If $\operatorname{char} \, \k=0$, $U=\k S_n$ and $\Gamma$ is the algebra of Jucys-Murphy elements, then $\Gamma$ is a Harish-Chandra subalgebra, and maximal commutative. $\operatorname{Specm} \, \Gamma$ parametrizes basis of irreducible $S_n$-modules \cite{OV}.
\end{example}

\begin{example}
    If $\operatorname{char} \, \k=0$ and $G$ is a finite group and $H$ a subgroup of $G$, then $\k H$ is a Harish-Chandra subalgebra of $\k G$.
\end{example}

We finish by introducing another definition from \cite{Fillmore}:

\begin{definition}
Let $U$ be an associative algebra and $\Gamma$ a Harish-Chandra subalgebra. We say that $\Gamma$ is a strong Harish-Chandra subalgebra if for each $\m \in \cfs \, \Gamma$, $U/ U \m$ and $U / \m U$ belongs to $\mathfrak{H}(U, \Gamma)$ (seen as left and right modules, respectively).  
\end{definition}


\subsection{A primer of invariant theory and Morita theory}


For use in the paper, we need to recall some results from noncommutative invariant theory and Morita theory. Our main references for the former are \cite{Passman2} and \cite{Montgomery}, and for the later \cite{Jacobson} and \cite{Lam2}.

First, we present a theorem that is a generalization of Noether's famous theorem in invariant theory\footnote{Namely, that if $A$ is an affine $\k$ algebra and $G$ a finite group of automorphisms of it, $A^G$ is also affine, and $A$ is a finitely generated $A^G$-module \cite{NoetherX}.} :

\begin{theorem}\label{MS}
Let $R$ be a finitely generated  Noetherian $\k$-algebra. If $G$ is a finite group of automorphisms of $R$, with $|G|^{-1} \in \k$, then $R^G$ is a finitely generated $\k$-algebra, $R^G$ is Noetherian, and $R$ is a finitely generated $R^G$-module.
\end{theorem}
\begin{proof}
    The first claim is proven in \cite{MS}. The second one in \cite[Corollary 1.12]{Montgomery} and the third onde in \cite[Corollary 5.9]{Montgomery}.
\end{proof}

In case our algebra is simple, more can be said:

\begin{theorem}\label{essential-invariant}
    Let $R$ be a simple algebra, and $G$ a finite group of outer automorphisms of $R$. Then
    \begin{enumerate}
        \item $R^G$ is a simple ring.
        \item $R$ is a finitely generated projective $R^G$-module
        \item $R^G$ and $R*G$ are Morita equivalent. In particular, if $R$ is Noetherian, $R^G$ is Noetherian.
    \end{enumerate}
\end{theorem}
\begin{proof}
    \cite[Theorem 2.4, Theorem 2.5, Corollary 2.6]{Montgomery}.
\end{proof}

In relation to last theorem, we recall a well known but extremely important fact:

\begin{proposition}
    Every automorphism of the Weyl algebra is outer.
\end{proposition}
\begin{proof}
    \cite[2.4.1]{Dumas}
\end{proof}

We also need a somewhat detailed discussion of the Lying Over property in the noncommutative setting.

\begin{theorem}\label{LO}
    Let $R$ be an affine algebra over $\k$, Noetherian. Let $G$ be a finite group of algebra automorphism of $R$ with $|G|^{-1} \in \k$. Then for each prime ideal $\mathfrak{p}$ of $R^G$ there is a prime ideal $P$ of $R$ \emph{lying over it}: $\mathfrak{p}=P \cap R^G$. Moreover, all other prime ideals $Q$ lying over are of the form $g.Q$, $g \in G$. In a more suggestive way, we have that the map

    \[ \operatorname{Spec} \, R \rightarrow \operatorname{Spec} \, R^G \]
is surjective and has finite fibers. We also have incomparability: if $P \subset P' $ are two ideals of $ \operatorname{Spec} \, R$ lying over the same prime ideal of $R^G$, then $P=P'$.
\end{theorem}
\begin{proof}
    \cite[Theorem 28.3]{Passman2}.
\end{proof}

Now we will discuss certain aspects of Morita theory important to us in great detail.

\begin{theorem} \label{Morita}
Let $R-\operatorname{Mod}$ and $S-\operatorname{Mod}$ be the categories of left modules of two Morita equivalent rings, and $(F,G)$ a pair of functors $F: R-\operatorname{Mod} \rightarrow S-\operatorname{Mod}$ and $G: S-\operatorname{Mod} \rightarrow R-\operatorname{Mod}$ that realize the equivalence of the categories. Then there is a bimodule $_
SP_{R}$ which is finitely generated projective as a $R$ or $S$ module, and a bimodule $_{R}Q_S$ which is finitely generated projective $S$ and $R$ module, such that $F$ is naturally isomorphic to $_SP \otimes_{R}-$ and $G$ to $_RQ \otimes_S-$. If $M$ belongs to $R-\operatorname{Mod}$, then $\GK \, M = \GK \, F(M)$; and similarly if it belongs to $S-\operatorname{Mod}$.
\end{theorem}
\begin{proof}
\cite[Morita I, Morita II]{Jacobson} and \cite[Theorem 1.7]{BH}
\end{proof}

Also an immediate consequence is that if $R-\operatorname{Mod}$ and $S-\operatorname{Mod}$ are equivalent, so are the categories of $\emph{right}$-modules

An important case is the following:

\begin{theorem}\label{Morita3}
Let $S$ be a ring and $e$ an element of it such that $e^2=e$ and $SeS=S$, i.e., $e$ is a full idempotent, and call $R=eSe$ (which has $e$ as its unit). Then the rings $S$ and $R$ are Morita equivalent, and the functors $F: S-\operatorname{Mod} \rightarrow R-\operatorname{Mod}$ and $G: R-\operatorname{Mod} \rightarrow S-\operatorname{Mod}$ that realize the Morita Equivalence between $S, R$ are explicitly given by $F(M)=eM$ and $G(N)=Se \otimes_{R} N$.
\end{theorem}
\begin{proof}
    \cite[Example 18.30]{Lam2}.
\end{proof}

As a  sub-case of previous Theorem, worth stating separately, is:

\begin{theorem}\label{Morita2}
Let $R$ be a $\k$-algebra which is simple and $G$ a finite group of outer automorphisms of it such that $|G|^{-1} \in \k$. Then calling $S=R*G$ and $e=1/G \sum_{g \in G} G$, we have $e^2=e$, $SeS=S$, $eSe \simeq R^G$, and hence the Morita equivalence between $R^G$ and $S$ is obtained as in the above Theorem; moreover we can write $F(M)$ as $M^G$.
\end{theorem}
\begin{proof}
    It is standard (see \cite{Montgomery}) that $e^2=e$, $eSe \simeq R^G$ and $eM=M^G$. By Theorem \ref{essential-invariant}, $S$ is a simple ring, and since $SeS$ is a non-null ideal of $S$, we must have $SeS=S$. By the previous theorem, we are done.
\end{proof}

\subsection{Galois rings and orders}

 Assume now we have a base field $\k$ which is algebraically closed and of zero characteristic. All fields, rings and algebras in this subsection are defined over $\k$.

The representation theory of Galois rings (orders) is a natural development of the ideas of last section, exploiting a common idea in representation theory, initiated in the work of Block \cite{Block} classifying the irreducible modules over $\mathfrak{sl}_2$ and the first Weyl algebra: embedd the algebras in question in skew group rings. The same idea has been successfully applied to generalized Weyl algebras (cf. \cite{Bavula}, \cite{BavulaK}).

To have an associative algebra as a Galois ring, we need the following data:

\begin{itemize}
    \item An associative algebra $U$ and a commutative subalgebra $\Gamma$ which is a domain such that $U$ is a finitely generated $\Gamma$-algebra and $\Gamma$ is Noetherian \footnote{this is not part of the original definition in \cite{FO}, but we are going to assume it in this paper since this condition holds in all known cases and simplifies significantly the exposition.}
    \item
    A finite Galois extension $L$ of $K:= \Frac \, \Gamma$; the Galois group will be denoted by $G$.
    \item 
    A submonoid $\mathcal{M}$ of $\Autk_\k \, L$ which is \emph{separating}: if $m, m' \in \M$ coincide in their restrictions to $K$, then $m=m'$.

    \item A $G$-action on $\M$ by conjugation.
\end{itemize}

Given this data, we can form the skew monoid ring $\L=L * \M$. $G$ acts on this ring by algebra automorphisms: $g.(lm)=g(l)m^g, \, g \in G, l \in L, m \in \M$, where we denote the action of $g$ in $m$ by $m^g$. Define $\K=\L^G$

\begin{definition}[\cite{FO}]\label{Galois-ring}
    
$U$ is a $\Gamma$-ring (or just Galois algebra) if $U \subset \mathcal{K}$ and $UK=KU=\mathcal{K}$

\end{definition}

\begin{lemma}\label{separating}
If $\mathcal{M} \cap G = id$ then $\mathcal{M}$ is separating. When $\mathcal{M}$ is a group, this is an equivalence.
\end{lemma}
\begin{proof}
    \cite[Lemma 2.2]{FO}.
\end{proof}

\begin{definition}
    Let $U$ be a Galois algebra in $\mathcal{K}$. Since every $u \in U$ belongs to $(L*\mathcal{M})^G$, we can write $u= \sum_{m \in \mathcal{M}} a_m m$, $a_m \in L$. Then we define $\operatorname{supp} \, u = \{m \in \mathcal{M}| a_m \neq 0 \}$.
    
\end{definition}

By definition, for every $u \in U$, $|\operatorname{supp} \, u|<\infty$.
\begin{proposition}\label{prop-supports}
    Suppose $U$ is a $\Gamma$-ring contained in $\mathcal{K}$ and is generated by a finite number of elements $u_1, \ldots, u_n$ as an algebra. If

    \[ \cup_{i=1}^n \operatorname{supp} \, u_i \]
    generates $\mathcal{M}$ as a monoid, them $U$ is a Galois $\Gamma$-ring.
\end{proposition}

\begin{proof}
    \cite[Proposition 4.1(1)]{FO}
\end{proof}




As an example we have the Weyl algebras $W_n(\k)$. They have the usual generators $x_1, \ldots, x_n, y_1, \ldots, y_n$ and relations $[x_i,x_j]=0$, $[y_i,y_j]=0$, $[y_i,x_j]=\delta_{ij}$, $i,j=1, \ldots, n$.

Call $t_i = y_i x_i$. Then we have a commutative polynomial subalgebra $\Gamma=\k[t_1,\ldots,t_n]$ inside $W_n(\k)$.

 In this paper, every time we consider the free abelian group $\mathbb{Z}^n$ we denote its canonical basis by $\varepsilon_1, \ldots, \varepsilon_n$. Despite the group being abelian, we will sometimes write $\varepsilon_i^{-1}$ instead of $- \varepsilon_i$, for notational convenience.


    

We have a natural map $\phi: W_n(\k) \rightarrow \Frac \, \Gamma * \Z^n$, with $\varepsilon_i(t_j)=t_j - \delta_{ij}, \, i,j=1,\ldots,n$, that sends $x_i \mapsto \varepsilon_i$ and $y_i \mapsto t_i \varepsilon_i^{-1}$. $\phi$ is clearly seen to be non-trivial; since $W_n(\k)$ is simple, the  map is an embedding. The support of $\phi(x_i), \phi(y_j), i,j=1,\ldots,n$ generates $\mathbb{Z}^n$ as a monoid. Since $\phi$ is an injection, we can recognize $W_n(\k)$ as a Galois $\Gamma$-ring inside $\k(t_1, \ldots, t_n)*\Z^n$.

To proceed with the theory, we need more notions.

\begin{definition}[\cite{FO}]
    Let $U$ be a $\Gamma$-ring in $\mathcal{K}$. Suppose that for every left finite dimensional vector space $W \subset \mathcal{K}$, $U \cap W$ is a finitely generated left $\Gamma$-module. Suppose also that for every right finite dimensional vector space $W \subset \mathcal{K}$, $U \cap W$ is a finitely generated right $\Gamma$-module. The $U$ is called a Galois order in $\mathcal{K}.$
\end{definition}

The next proposition is useful in finding Harish-Chandra subalgebras.

\begin{proposition}\label{order-HC}
    If $U$ is Galois order over $\Gamma$, and $\Gamma$ is a finitely generated $\k$ algebra, then $\Gamma$ is a Harish-Chandra subalgebra.
\end{proposition}

\begin{proof}
    \cite[Corollary 5.4]{FO}
\end{proof}

\begin{proposition}
    Suppose $U$ is a Galois algebra over $\Gamma$ in a skew monoid ring. If $U$ is a left and right projetive $\Gamma$-module, then $U$ is actually a Galois order.
\end{proposition}
\begin{proof}
    \cite[Corollary 5.2]{FO}
\end{proof}

An important related problem is a conjecture by V. Futorny that claims that if $U$ is a $\Gamma$-Galois order, then $U$ is necessarily a projective $\Gamma$-module.

\begin{definition}
    Let $U$ be a $\Gamma$-ring inside $\mathcal{K}$. Then we say that $U$ satisfies the Hartwig conditions if there is Noetherian integrally closed commutative domain $\Lambda$ with $G, \mathcal{M}$ subgroups of $\operatorname{Aut}_k \, \Lambda$ such that $L= \Frac \, \Lambda$, and the $G$ and $\mathcal{M}$ actions on $L$ are induced, via localization, from the action on $\Lambda$. We also require $\Gamma=\Lambda^G$.
\end{definition}

Galois ring that satisfy the Hartwig condtions are precisely the Galois rings under the slightly more restrictive point of view in \cite{Hartwig}. We did not have to prove that $\Lambda$ is a Noetherian $\Lambda^G$-module (\cite[Section 2.1, (A3)]{Hartwig}) precisely because we are assuming $\Lambda$ Noetherian (cf. Theorem \ref{MS}).

\begin{proposition}
    If $U$ is a $\Gamma$-ring that satisfy the Hartwig conditions, then $\Lambda$ is the integral closure of $\Gamma$ in $L$.
\end{proposition}
\begin{proof}
    \cite[Lemma 2.1]{Hartwig}
\end{proof}

Another essential concept introduced in \cite{Hartwig} is that of a principal Galois order.

\begin{definition}
    Let $U$ be a $\Gamma$-ring in $\mathcal{K}$. Every $u$ can be written uniquely in the form $u=\sum_{m \in \mathcal{M}} a_m m \, , a_m \in L$. Let $a \in L$. We define the evaluation $u(a)$ to be $\sum_{m \in \mathcal{M}} a_m m(a).$ 
\end{definition}

\begin{definition}
Let $U$ be a $\Gamma$-ring in $\mathcal{K}$ with the property that, for every $u \in U$, the evaluation $u(\gamma) \in \Gamma, \, \forall \gamma \in \Gamma$. Then $U$ is called a principal Galois order.    
\end{definition}

There is also a notion of co-principal Galois order:  there is an anti-isomorphism $\bar{\iota} : L*\M \rightarrow L*\M^{-1}, f*m \mapsto f*m^{-1}$, and we get that $U$ is Galois $\Gamma$-ring (order) in $(L*\M)^{G}$ if and only if $\bar{\iota}(U)$ is Galois $\Gamma$-ring (order) in $(L*\M^{-1})^G$. If $U$ is a Galois $\Gamma$-ring, then it is a co-principal Galois order if for every $X \in U$, $\bar{\iota}(X)(\gamma) \in \Gamma$, $\gamma \in \Gamma$.

As the name suggests, principal Galois orders are, indeed, Galois orders \cite[Theorem 2.21]{Hartwig}. All known Galois orders are actually principal Galois orders. A relevant open problem is to find an example of a Galois order that is not principal.

Finally, we have the notion of rational Galois order \cite[Definition 4.3]{Hartwig}

\begin{definition}
    Let $G < \operatorname{GL}_n(\k)$ be a finite complex reflection group on a finite dimensional vector space $V$, $\Lambda=S(V)$, $L=\Frac \, \Lambda$, $\Gamma=\Lambda^G$, $K=\Frac \, \Gamma$, $V$ acting on $L= \Frac \Lambda$ by translations: $w\cdot p(v)=p(v-w), p \in \Lambda, w,v \in V$. If $U$ is generated as an algebra by $\Gamma$ and elements $x_1,\ldots, x_r$, inside $L*V$ such that, for each $x_i$ there exists a semi-invariant $d_\chi$ of a character $\chi$ of $G$ such that $d_\chi x_i \in \Lambda*V$, by calling $\mathcal{M}$ the monoid generated inside $V$ by $\bigcup \operatorname{supp} x_i$, then $U$ is a $\Gamma$-rational Galois order in $(L*\mathcal{M})^G$
\end{definition}

Finite $W$-algebras of type $A$, OGZ algebras and $U(\mathfrak{gl}_n)$ are rational Galois orders (cf. \cite{Hartwig}). There is a similar notion of co-rational Galois order: that $\bar{\iota(U)}$ is raional Galois order in $(L*\M^{-1})$

\begin{theorem}
    Every (co-)rational Galois order is a (co-)principal Galois order.
\end{theorem}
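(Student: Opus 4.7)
The plan is to verify the principal condition $U(\Gamma) \subset \Gamma$ by reducing first to generators and then invoking Terao's theorem. I would begin by observing that the evaluation map $u \mapsto u(a)$ is multiplicative in $u$: if $u_1 = \sum_m \alpha_m m$ and $u_2 = \sum_n \beta_n n$ in $L*\mathcal{M}$, a short computation in the smash product shows that both $(u_1 u_2)(a)$ and $u_1(u_2(a))$ equal $\sum_{m,n} \alpha_m\, m(\beta_n)\, (mn)(a)$. Since additionally $\gamma(a) = \gamma a \in \Gamma$ whenever $\gamma \in \Gamma$, and evaluation is additive, the fact that $U$ is generated as an algebra by $\Gamma$ and $X$ reduces everything to the single claim that $x(a) \in \Gamma$ for every $x \in X$ and every $a \in \Gamma = S(V^*)^G$.

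For this core step I would fix $x \in X$ with associated character $\chi$ so that $d_\chi x \in S(V^*)*V$, and consider
\[
h \;:=\; (d_\chi x)(a) \;=\; d_\chi \cdot x(a),
\]
where the second equality just reflects that $d_\chi \in L$ sits in the identity-monoid component of the smash product. The submonoid $\mathcal{M}$ lies in $V$ and acts on $L$ by translations, which preserve $S(V^*)$; since $a \in S(V^*)$ and the coefficients of $d_\chi x$ are in $S(V^*)$ by hypothesis, this forces $h \in S(V^*)$. On the other hand the $G$-equivariance identity $g(u(\ell)) = (g.u)(g.\ell)$, a routine check from the $G$-action on $L*\mathcal{M}$, combined with the $G$-invariance of both $x$ and $a$ yields $x(a) \in L^G$. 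Hence $g(h) = g(d_\chi)\, x(a) = \chi(g)\, h$, so $h$ belongs to $\Gamma_\chi$; Terao's theorem then gives $h = k\, d_\chi$ for some $k \in S(V^*)$, and therefore $x(a) = k \in S(V^*) \cap L^G = S(V^*)^G = \Gamma$.

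I do not anticipate a serious obstacle: once the compositional and $G$-equivariance properties of the evaluation map are in hand, the argument is essentially three lines. The only delicate aspect is the interplay between $G$-invariance and $\chi$-semi-invariance. Without the $G$-invariance of $x(a)$ one could conclude only that $x(a) \in d_\chi^{-1} S(V^*)$, which is strictly weaker than $x(a) \in \Gamma$; it is precisely Terao's description of $\Gamma_\chi$ that upgrades this a priori containment, via the semi-invariance of $h = d_\chi \cdot x(a)$, to the desired membership in $\Gamma$.
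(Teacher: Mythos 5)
Your proof is correct. The paper itself gives no argument here --- it only cites Hartwig's Theorem 4.2 --- and your reduction to generators via the compositionality of evaluation, followed by the observation that $d_\chi\, x(a)$ is a $\chi$-semi-invariant polynomial and hence divisible by $d_\chi$ by Terao's theorem, is essentially the argument of that cited result, so you have in effect supplied the proof the paper outsources.
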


\begin{proof}
    \cite[Theorem 4.2]{Hartwig}.
\end{proof}

The representation theory of such algebras was studied in detail in \cite{FGRZ} and \cite{Webster}.
\subsection{Generalized Weyl algebras}

In the begining of this section we deal with an arbitrary $D$ algebra over an arbitrary field $\k$.

Let's recall what a generalized Weyl algebra (henceforth, as usual, abreviated as GWA) is \cite{Bavula}. For a very detailed survey of many aspects of the theory of such algebras, we refer to:
\cite{Gaddis}.

\begin{definition}\label{def-GWA}
    Let $D$ be a ring, and $\sigma=(\sigma_1, \ldots, \sigma_n)$ a n-uple of commuting automorphisims: $\sigma_i \sigma_j = \sigma_j \sigma_i$, $i,j=1,\ldots,n$. Let $a=(a_1,\ldots,a_n)$ be a n-uple of regular elements belonging to the center of $D$, such that $\sigma_i(a_j)=a_j, j \neq i$. The generalized Weyl algebra $D(a, \sigma)$ of rank $n$ with base ring $D$ is generated over $D$ by $X_i^+, X_i^-$, $i=1,\ldots, n$ and relations

    \[ X_i^+d= \sigma_i(d) X_i^+ \, ,  X_i^- d= \sigma_i^{-1}(d) X_i^- \, , d \in D, \]

    \[ [X_i^+, X_j^+]=[X_i^-, X_j^-]=[X_i^+, X_j^-]=0, \, i,j=1,\dots,n, i\neq j, \]

    \[ X_i^- X_i^+ = a_i, \, \, X_i^+  X_i^- = \sigma_i(a_i). \]
\end{definition}

$D(a, \sigma)$ is a free left and right $D$-module. If $D$ is a domain, $D(a, \sigma)$ is a domain. If $D$ is left or right Noetherian, so is $D(a, \sigma)$ \cite{Bavula}.


\begin{proposition}\label{prop-basic-GWA}
    Let $D(a, \sigma)$ be a GWA of rank one. Then $D(a, \sigma)^{\otimes \, n}$ is a generalized Weyl algebra of rank $n$ $D'(a', \sigma')$:

     $D'= D^{\otimes \, n}$. 

     $a_i= 1 \otimes \ldots \otimes 1 \otimes a \otimes 1 \otimes \ldots$, with $a$ in the $i$-th position.

     $\sigma_i = id \otimes \ldots \otimes id \otimes  \sigma \otimes id \otimes  \ldots$, with $\sigma$ in the $i$-th position.
\end{proposition}
\begin{proof}
    \cite{Bavula}
\end{proof}

\begin{example}

$W_1(\k)$ is the GWA $D(a, \sigma)$, where $D=\k[h]$, $a=h$, $\sigma(h)=h-1$. $W_1(\k)^{\otimes \, n} = W_n(\k)$.

\end{example}

We have the following improvement of \cite[Theorem 14]{FS3}

\begin{theorem}\label{theorem-GWA}
    Let $D$ be a commutative Noetherian domain over an algebraically closed field of zero characteristic, $D(a, \sigma)$ a rank $n$ GWA. Consider the natural map $\phi: \mathbb{Z}^n \rightarrow G$, where $G$ is the subgroup of $\operatorname{Aut}_\k \, D$ generated by $\sigma_1, \ldots \sigma_n$, given by $\sum_{i=1}^n z_i \varepsilon_i \mapsto \sigma_1^{z_1} \ldots \sigma_n^{z_n}, \, z_i \in \mathbb{Z}$. $\phi$ is clearly surjective. If $\phi$ is also injective (and hence, a bijection), then the map

    \[ X_i^+ \mapsto \varepsilon_i \, \, X_i^- \mapsto a_i \varepsilon_i^{-1} \]

    from $D(a, \sigma)$ to $\Frac \, D * \mathbb{Z}^n$, where $\varepsilon_i=\sigma_i$, is a realization of the GWA as a principal Galois order.
\end{theorem}
\begin{proof}
    By the above mentioned Theorem of \cite{FS3}, the above map realizes $D(a,\sigma)$ as a $\Gamma$-ring in $F*\Z^n$, where $F= \Frac \, D$ and $\Gamma = D$. The fact that is a principal Galois order is clear because in the embedding of the algebra in $F*\Z^n$ there are no denominators in the formulas.
\end{proof}

If the conditions of the above Theorem are not satisfied, we still have something to say (cf. Proposition \ref{TGWA-HC}).


The following definition is going to be used frequently in the subsequent work.

\begin{definition}\label{tensorial}
We call a rank $n$ GWA \emph{tensorial} if it is the tensor product of $n$ times the same generalized Weyl algebra of rank 1 $D(a, \sigma)$ with infinite order automorphism. We denote the GWA $D(a, \sigma)^{\otimes n}$ by $D(a,\sigma)^n$.
\end{definition}

It will also be important to have a simplicity criteria for GWA`s. We have (\cite[Theorem 4.5]{BF}):

\begin{theorem}\label{GWA-simplicity-1}
    Let $D(a,\sigma)$ be a rank $n$ GWA. Then it is simple if and only if the following holds:
    \begin{enumerate}
        \item $D$ does not have any proper ideal stabilized by all $\sigma_i, i=1,\ldots,n$.
        \item The subgroup of the factor group $\operatorname{Aut}_\k \, D / \operatorname{Inn}_\k \, D$, where $\operatorname{Inn}_\k \, D$ is the subgroup of inner autormophisms, generated by the images of $\sigma_1, \ldots, \sigma_n$, is isomorphic to the free abelian group $\Z^n$
        \item $D a_i + D \sigma_i^m(a_i)=D, m \leq 0, i=1,\dots, n$.
    \end{enumerate}
\end{theorem}

\begin{corollary}\label{GWA-simplicity-2}
    \begin{itemize}
        \item Call a rank 1 GWA $A$ of classical type if $A=\k[h](a,\sigma)$, $\sigma(h)=h-1$. If there is no irreducible polynomial $p[h]$ such that both $p, \sigma^i p$ are multiples of $a$, for any $i \geq 0$, the GWA of classical type is simple.
        \item Call a rank 1 GWA $A$ of quantum type if $A=\k[h^{\pm 1}](a,\sigma)$, $\sigma(h) = \lambda h$, $\lambda \neq 0$, $\lambda \neq 1$, $\lambda$ is not a root of unity. If there is no irreducible polynomial $p[h]$ with $p, \sigma^i(p)$ both being multiples of $a$, $i \geq 0$, the GWA of quantum type is simple.
        \item If a generalized Weyl algebra $A$ is the tensor product $\bigotimes_{i=1}^n A_i$, where each $A_i$ is a simple GWA's of either classical or quantum type, then $A$ is also simple

    \end{itemize}
\end{corollary}
\begin{proof}
    Everything follows from the previous Theorem and \cite[Corollary 4.8]{BF}.
\end{proof}

We state now a simple fact that will be important later. We include the proof for the sake of clarity

\begin{proposition}\label{skew-is-GWA}
    Let $D$ be a commutative algebra and $A=D*\mathbb{Z}^n$ a skew product ring. Then $A$ is the GWA with base ring $D$, $\sigma_i=\varepsilon_i, i=1,\ldots,n$ and each $a_i=1, i=1,\ldots,n$.
\end{proposition}
\begin{proof}
    Consider the map $\phi:D*\mathbb{Z}^n \rightarrow D(a,\sigma)$ that sends $\varepsilon_i$ to $X_i^+$ and $-\varepsilon_i$ to $X_i^-$, and is the identity on $D$. This map is clearly an homomorphism. It sends a base as a free $D$-module in the domain to another one in the codomain. Hence it is an isomorphism.
\end{proof}

We finish with an important theorem in the invariant theory of GWAs.

\begin{theorem}[\cite{JW}]\label{JW}
    Let $D(a, \sigma)$ be a rank one GWA. We can define an action of the clyclic group $G_m$ on it as follows: picking a primitive $m$-th root of unity $\zeta$, and a generator $g$ of the group, we can make it act on the GWA by $g.X^+ = \zeta X^+$. $g X^-=\zeta^{-1} X^-$, and $g(d)=d, \, d \in D$. This really makes $G_m$ acts by algebra automorphisms of $D(a, \sigma)$. We have an isomorphism
    $\phi: D(a, \sigma)^{G_m} \simeq D(a^*, \sigma^m)$, where

\[ a^* = a \sigma^{-1}(a) \ldots \sigma^{-(m-1)}(a) .\]

Calling $Y^+$ and $Y^-$ the GWA generators of $ D(a^*, \sigma^m)$, the isomorphism $\phi$ can be explicitly given by $(X^+)^m \mapsto Y^+$, $(X^-)^m \mapsto Y^-$, $d \mapsto d, d \in D$.
\end{theorem}

\begin{remark}
    The author stresses that, despite the result sounding obvious, its proof is far from trivial.
\end{remark}

This, of course, is a generalization of the case where $D(a,\sigma)$ is the first Weyl algebra, considered in \cite[pp.225]{FS3}, due to Bavula \cite{Bavula}. It is going to play an essential role in what follows.

\section{Harish-Chandra subalgebras and Harish-Chandra modules II}

In this section, we will work on the setting of Harish-Chandra modules of \cite{DFO} \cite{Fillmore}. The first subsection deals with generalized and twisted generalized Weyl algebras, and a generalization of the former to an infinite rank case.

The second subsection is a recollection of many instances in the literature where pairs of algebra/Harish-Chandra subalgebra appears in the literature.

The third subsection is somewhat ring theoretical in flavour, and discuss quasicommutative algebras their invariants, and the essential fact that quasicommutativity is a Morita invariant; and then moves to the main results of this paper: the comparison of the Harish-Chandra modules of an algebra and its ``spherical subalgebra".

\subsection{TGWAs and infinite rank generalized Weyl algebras}

We begin with a simple lemma, whose proof we leave as a simple exercise.

\begin{lemma}\label{lemma-HC}
Let $U$ be an algebra by elements $\{ u_i \}_{i \in I}$ such that $Du_i=u_iD$, where $D$ is a subalgebra, for every $i \in I $. Then $D$ is quasicentral.

\end{lemma}

Now we apply this lemma. The following result is probably known (see, e.g, \cite{DGO}), but the author could not find an explicit statement of it on the literature.

\begin{proposition}\label{TGWA-HC}
    Let $A$ be either a GWA $D(a,\sigma)$, or a TGWA\footnote{abreviation for twisted generalized Weyl algebra, introduced in \cite{MT}. For a survey on the subject, see, e.g., \cite{Gaddis}.}$\mathcal{A}_\mu(D,\sigma,t)$ (in the notation of \cite{FHTGWA}). Then $D$ is quasicentral subalgebra. If it is quasicommutative, then it is a Harish-Chandra subalgebra. 
\end{proposition}
\begin{proof}
In both cases (cf. \cite{Bavula}, \cite{FHTGWA}), the algebra is generated by elements $X_i^+, X_i^-$ that satisfy $DX_i^\pm=X_i^{\pm}D$.
\end{proof}

We are now going to introduce the definition of an infinite rank GWA. \footnote{The idea of such algebras is alluded in \cite{FGM}, but it was not explored.}

For the moment, $D$ will be just a ring.

\begin{definition}
    Let $D$ be a ring and $\mathbb{I}$ an indexing set of any infinite cardinality $\aleph_\iota$. Let $\{ a_i \}_{i \in \mathbb{I}}$ be a set of regular elements on the center of $D$ and $\{ \sigma_i \}_{i \in \mathbb{I}}$ be a set of commuting automorphisms of $D$ such that $\sigma_i(a_j)=a_j, \,, i \neq j$. A generalized Weyl algebra of rank $\aleph_\iota$ is an algebra generated by $D$ and a set of symbols $X_i^+, X_i^-, i \in \mathbb{I}$, subject to the relations

    \[ X_i^+d=\sigma_i(d) X_i^+, X_i^-d=\sigma_i^{-1}(d) X_i^{-}, \]

    \[ [X_i^+, X_j^+]=[X_i^-,X_j^-]=[X_i^+, X_j^-]=0, i \neq j\]

    \[ X_i^- X_i^+ =a_i, \, X_i^+ X_i^-=\sigma(a_i), \, i, j \in \mathbb{I}, d \in D.\].

We denote such algebra by $D(a, \sigma)$, just like the ordinary case.    
\end{definition}

A particular case is the infinite rank Weyl algebra $W_\infty(\k)$, whose representation theory was considered in \cite{FGM}, \cite{BBF}, among others. In this case $D=\k[h_i]_{i \in \mathbb{I}}$, $\sigma_i(h_i)=h_i-1$, and $\sigma_i$ fixes $h_j$ if $j \neq i$; and each $a_i=h_i$.

For the next result, assume $D$ to be an algebra over any field.

\begin{theorem}
    Let $D(a, \sigma)$ be an infnite rank GWA. It is a free left and right $D$-module. If $D$ is a domain, $D(a, \sigma)$ is a domain. $D$ is a quasicentral subalgebra of $D(a,\sigma)$. Hence, if $D$ is also quasicommutative, then $D$ is a Harish-Chandra subalgebra.
\end{theorem}
\begin{proof}
    Let $\mathfrak{Z}=\bigoplus_{i \in \mathbb{I}} \mathbb{Z}$, with basis $\{ \varepsilon_i \}_{i \in \mathbb{I}}$. There is a natural notion of degree of a monomial on the $X_i^\pm$ - this degree belongs to $\mathfrak{Z}$. Let $\mathbb{I}_{fin}$ be the collection of finite subsets of $\mathbb{I}$. Then $D(a,\sigma)=\sum_{\mathsf{I} \in \mathbb{I}_{fin}} D^\mathsf{I}$, where $D^\mathsf{I}$ is the the set of $D$-linear combinations of monomials with degree $\mathsf{I}$. Hence $D(a,\sigma)$ is a free $D$-module. Given $u, v \in D$ two nonzero elements, all monomials appearing in $u, v$ involves indices that belongs to a certain $\mathsf{I} \in \mathbb{I}_{fin}$. Hence we can imagine $u,v$ as living in the obvious rank $|\mathsf{I}|$ GWA $D(a|_\mathsf{I},\sigma|_\mathsf{I})$ - and hence the their product is nonzero by the well known finite rank theory. Finally, $D$ is quasicentral by Lemma \ref{lemma-HC}.
\end{proof}

Suppose finally that $D$ is a commutative domain. There is a natural epimorphism from $\mathfrak{Z}$ into the subgroup of $\operatorname{Aut}_\k \, D$ generated by $\{ \sigma_i \}_{i \in \mathbb{I}}$. If this epimorphism is actually an isomorphism (cf. Theorem \ref{theorem-GWA}), we have:

\begin{theorem}
Let $D(a,\sigma)$ be a rank $\aleph_i$ generalized Weyl algebra. Then it is a principal Galois order in $F*\mathfrak{Z}$\footnote{This is the first instance in the literature where $\M$ is not $\mathbb{Z}^n$, $\mathbb{N}^n$, or the semi-direct product of one of them with a finite group.}, where $F=\Frac \, D$.
\end{theorem}

So in particular $W_\infty(\k)$ is a Galois order. We expect the above Theorem to have interesting implications to the representation theory.

\subsection{Harish-Chandra subalgebras and modules in the literature}

In this section we are going to discuss some important examples from representation theory, not previously explicitly noted before, where we have Harish-Chandra subalgebras and Harish-Chandra modules.

The following lemma will be very useful in verifying that some algebras are quasicentral.

\begin{lemma}\label{Berest}
    Let $A$ be an affine commutative algebra and $M$ an $A$-bimodule such that the adjoint action of $A$ is locally nilpotent. Then for every $v \in M$, $AvA$ is a finitely generated left and right $A$-module.
\end{lemma}
\begin{proof}
\cite[Lemma 2.8]{Berest}
\end{proof}

This is the case, for instance, when $\operatorname{ad} a$ is a semisimple operator for each $a \in A$.

As a first appplication, we have

\begin{proposition}
    Let $X$ be an irreducible affine variety, not necessarily smooth, and $\mathcal{D}(X)$ its ring of differential operators. Then $\mathcal{O}(X)$ is a Harish-Chandra subalgebra of $\mathcal{D}(X)$. Similarly, if $G$ is a finite group of automorphisms of $X$, and $X$ is smooth, $\mathcal{O}(X)^G$ is a Harish-Chandra subalgebra of $\mathcal{D}(X)^G$.
\end{proposition}

\begin{proof}
    The first claim follows from the Grothendieck definition of ring of differential operator (cf. \cite[Chapter 15]{McConnell}) and Lemma \ref{Berest}. In the second situation, we have that $\mathcal{D}(X)^G$ is a subring of $\mathcal{D}(X/G)$ by \cite[Theorem 5]{Levasseur}. $\mathcal{O}(X/G)$ acts ad-nilpotently in the larger ring, and hence also on the smaller. Hence we conclude as before.
\end{proof}

The case of the Weyl and their invariant subalgebras is particularly interesting. We have $W_n(\k) \simeq \mathcal{D}(\mathbb{A}^n)$. In the classical work of Galois orders \cite{FO} \cite{FO2}, the Harish-Chandra subalgebra choosen is $\Gamma=\k[t_1, \ldots, t_n]$, for this is the algebra which makes more sense in the context of generalized Weyl algebras (cf. \cite{DGO} \cite{BBF} and references therein). $\Gamma$ is also maximal commutative\footnote{For a proof, combine Proposition \ref{diff-ops} and \cite[Proposition 2.14]{Hartwig}.}. On the other hand, we can choose another maximal commutative Harish-Chandra subalgebra, $\k[\mathbb{A}^n]$; which, moreover, is ad-nilpotent. 

In sections bellow, we are going to see that for many finite $G < \operatorname{Aut}_\k \, W_n(\k)$, $\Gamma^G$ is a Harish-Chandra subalgebra of $W_n(\k)^G$ and we can realize this pair as a Galois order.

Using the theory of Hopf Galois orders by J. Hartwig \cite{Hartwig2}, Harish-Chandra modules for $\mathcal{D}(\mathbb{A}^n)^G$ and Harish-Chandra subalgebra $\k[\mathbb{A}^n]^G$ can also be studied (cf. \cite[Section 5]{Hartwig2}). We will leave this investigation for a future opportunity.

Harish-Chandra subalgebras also appear in the context of infinite dimensional Lie algebras. For example, in the work \cite{Billig}  the irreducible weight modules with finitely dimensional weight spaces were considered for the algebra $\mathcal{W}_n(\k)$ of vector fields on the algebraic torus $\k^{\times n}$. Namely, these are the $\mathcal{W}_n(\k)$-modules which decompose as a direct sum of finite dimensional eigenspaces for the Cartan subalgebra $\mathfrak{h}=\sum \k t_i d_i$, where $d_i(t_i)=\delta_{ij}$ is a derivation. As $\mathfrak{h}$ acts diagonally on $\mathcal{W}_n(\k)$, $U(\mathfrak{h})$ acts locally ad-nilpotently in $U(\mathcal{W}_n)$. Hence, by Lemma \ref{Berest}, it is a Harish-Chandra subalgebra, and the modules studied in \cite{Billig} are Harish-Chandra modules for the pair $U(\mathcal{W}_n(\k))$, $U(\mathfrak{h})$. We have the same situation in the case our algebra is the enveloping algebra of the Virasoro algebra \cite{Mathieu}\footnote{It is relevant to remark that the enveloping algebra of these infinite dimensional Lie algebras are not left or right Noetherian \cite{SW} . } .

In the quantum group case, we have the Drinfeld-Jimbo presentation of $U_q(\mathfrak{g})$ (see, e.g, \cite[I.6.3]{Brown}), where $\mathfrak{g}$ is a semisimple Lie algebra over an algebraically closed field of zero characteristic and $q \in \k$ is different from 0 and 1. By the PBW-Theorem (\cite[I.6.8]{Brown}), $\Gamma=\k[K_\alpha]_{\alpha \in \Sigma}$, where $\Sigma$ is the root system of $\mathfrak{g}$, is a Harish-Chandra subalgebra of $U_q(\mathfrak{g})$, and weight modules are a particular example of Harish-Chandra modules.

The study of more general representations  of Lie algebras of vector fields on smooth affine varieties started in \cite{Billig2}. They are very interesting algebras, being simple and infinite dimensional. However, since unlike $\mathcal{W}_n(\k)$, in general they may not have any semisimple or nilpotent element, the so called subclass of $\mathcal{A}\mathcal{V}$-modules, introduced in \cite{Billig} and throughly studied in \cite{Billig2} and subsequent papers, became an important focus of attention. These are $\mathcal{V}$-modules that have a compatible action of the algebra of polynomial functions on the smooth affine variety, denoted by $A$. Equivalently, they are representations of the smash product algebra $\mathcal{H}(\mathcal{V}):=A*U(\mathcal{V})$ (where $\mathcal{V}$ acts on $A$ by derivations as usual). Similarly to the case of algebras of differential operators, $A$ is subalgebra of $\mathcal{H}(\mathcal{V})$ whose action is ad-nilpotent. Hence, again by Lemma \ref{Berest}:

\begin{proposition}
    $A$ is a Harish-Chandra subalgebra of $\mathcal{H}(\mathcal{V})$.
\end{proposition}

\begin{lemma}
    Let $A$ be an affine commutative algebra over an algebraically closed field and $P$ a finitely generated projective module. Then $P$ is \emph{not} a generalized weight space module for $A$.
\end{lemma}
\begin{proof}
    It is canonical that there exists another finitely generated projective module $Q$ such that $P \oplus Q$ is a free module of rank $n$. Hence the action of $A$ on $P$ must be torsion free.
\end{proof}

Most research on $\mathcal{A}\mathcal{V}$-modules has focused on the case the modules are finitely generated over $A$, and in this case, by \cite[Proposition 1.2]{BR}, those modules are finitely generated projective. Since, without any restrictions, it is too difficult to expect a complete understanding of $\mathcal{V}$-modules, perhaps the direction of studying Harish-Chandra modules in the category $\mathbb{H}(\mathcal{H}(\mathcal{V}),A)$ may be a promising one.



\subsection{Quasicommutative algebras and ``spherical subalgebras"}

The first part of this section is dedicated to the study of the ring theoretical notion of being a quasicommutative algebra. The main results of this section is that it is a Morita invariant (Theorem \ref{qc-is-Morita}) and behaves well when we pass to fixed rings under the action of a finite group whose order is invertible in $\k$ (Theorem \ref{invariants-quasicommutative}). Then we move to the theme of passing from the representation theory of an algebra $U$ to a spherical subalgebra $eUe$, where $e$ is an idempotent; this is a atheme considered in \cite{Webster} and \cite{Hartwig2} (see also \cite{EG}) -  but considered here in the much more general setting of \cite{DFO} and \cite{Fillmore}.

We begin, first, with a simple Proposition (compare with \cite[Corollary 1.7]{FLS}).

\begin{proposition}
If $\Gamma_1$ and $\Gamma_2$ are quasicommutative algebras, so is $\Xi:=\Gamma_1 \otimes \Gamma_2$. If $\Gamma_i$ is a Harish-Chandra subalgebra in $U_i$, $i=1,2$, then $\Gamma_1 \otimes \Gamma_2$ is a Harish-Chandra subalgebra of $U_1 \otimes U_2$.
\end{proposition}
\begin{proof}
    Let $V$ be a finite dimensional $\Xi$-module. We can identify $\Gamma_i, i=1,2$ as subalgebras of $\Xi$ that centralize each other: $\Gamma_1 \Gamma_2 = \Gamma_2 \Gamma_1$. As $\Gamma_i, i=1,2$ are quasicommutative, there are subsets $S_1, S_2$ of $\cfs \, \Gamma_1$ and $\cfs \, \Gamma_2$. respectively, such that $V$ restricted to $\Gamma_i$ is a generalized weight module with $S_i$ as its support, $i=1,2$. $V|_{\Gamma_1} = \bigoplus_{\m \in S_1} V(\m)$, and since $\Gamma_1 \Gamma_2=\Gamma_2 \Gamma_1$, each $V(\m)$ is also a $\Gamma_2$-module. Then, clearly, as a $\Xi$-module, $V=\bigoplus_{(\m,\n) \in S_1 \times S_2} V( \, \langle \m, \n \rangle \,)$, where $\langle \m, \n \rangle$ is the element $\m \otimes \Gamma_2 + \Gamma_1 \otimes \n$ of $\cfs \, \Xi$, where $\m \in S_1, \n \in S_2$. That $\Xi$ is quasicentral in $U_1 \times U_2$ is a simple exercise.
\end{proof}

Our second result follows the philosophy that PI-algebras behave like commutative algebras in many cases (cf. \cite{Drensky}).

\begin{theorem}
If $\Lambda$ is a prime affine Azumaya algbera\footnote{cf. \cite[13.7.6]{McConnell}.} over an algebraically closed field, then it is quasicommutative.    
\end{theorem}
\begin{proof}
    A standard argument (e.g., \cite[I.13.5]{Brown}) shows that, if $V$ is any irreducible $\Lambda$-module, $\Lambda/\operatorname{Ann}(V) \simeq M_t(\k)$, for a certain $t$ between $1$ and the PI-degree of $\Lambda$. Since $\Lambda$ is Azumaya, for all irreducible $V$, $t=\operatorname{PI-deg} \, \Lambda$, which we will call $n$ (cf. \cite[13.7.14]{McConnell}). So every irreducible module $V$ of $\Lambda$ has a natural structure of $M_n(\k)$-module. Since $M_n(\k)$ is a semisimple ring, given two irreducible modules $V, W$ for $\Lambda$, $\operatorname{Ext}^1(V,W)=0$. Hence $\Lambda$ is quasicommutative.
\end{proof}

\begin{example}
Examples of prime affine Azumaya algebras are abundant: if $\k$ is algebraically closed with prime characteristic, the Weyl algebra, or more generally, any ring of crystalline differential operators on an smooth affine variety, is of this class (e.f. \cite{BMR}); the Jordan simple localization of the Maltsionits q-deformation of the Weyl algebra at roots of unity is another example, or $\mathcal{O}_q(\k^{\times n})$ at roots of unity (cf. \cite{Brown} and \cite{BY}).
\end{example}

We are going to show the very important result that quasicommutativity is a Morita invariant.

\begin{lemma}
    Let $\Lambda$ and $\Gamma$ be two algebras Morita equivalent. Then the sets $\cfs \, \Lambda$ and $\cfs \, \Gamma$ are in bijection. More precisely, there is a $(\Lambda, \Gamma)$-bimodule $P$, which is a finitely generated projective left $\Lambda$-module/right $\Gamma$-module, such that to each $\m \in \cfs \, \Lambda$ corresponds a unique $\mu(\m) \in \cfs \, \Gamma$ such that $\m P = P \mu(\m)$, and vice-versa.
\end{lemma}
\begin{proof}
    By standard Morita theory (\cite[Morita I (6)]{Jacobson}), the lattice of ideals of both algebras is isomorphic. Even more, by \cite[18.47, 18.49]{Lam2}, the lattice isomorphism reduces to a bijection of the cofinite spectrum which, again by \cite[Morita I (6)]{Jacobson}, is of the form described.
\end{proof}

\begin{theorem}\label{qc-is-Morita}
    Let $\Lambda$ and $\Gamma$ be Morita equivalent algebras. Then if $\Lambda$ is quasicommutative, then so is $\Gamma$: quasicommutativity is a Morita invariant.
\end{theorem}
\begin{proof}
    In the notation of the previous Lemma, and using Theorem \ref{Morita}, let $Q$ be the ($\Gamma, \Lambda$)-bimodule such that the functor $G(M):$ $M \mapsto Q \otimes_\Lambda M$ realizes the equivalence between $\Lambda-\operatorname{Mod}$ and $\Gamma-\operatorname{Mod}$. Let $\m \in \cfs \, \Lambda$ corresponds to $\mu(\m) \in \cfs \, \Gamma$. Then by \cite[18.49]{Lam2}, $G(S_{\m})$ corresponds to $S_{\mu(\m)}$. In $\Lambda-\operatorname{Mod}$ we have a free resolution $\ldots \rightarrow \Lambda^{n_2} \rightarrow \Lambda^{n_1} \rightarrow S_\m \rightarrow 0$. Let $\n \in \cfs \, \Lambda, \n \neq \m$. Apply the functor $\operatorname{Hom}_\Lambda(S_\n, \cdot)$. We have $\ldots \rightarrow \operatorname{Hom}_\Lambda(S_\n, \Lambda^{n_2}) \rightarrow \operatorname{Hom}_\Lambda(S_\n, \Lambda^{n_1}) \rightarrow \operatorname{Hom}_\Lambda(S_\n, S_\m) \rightarrow 0$. $\operatorname{Ext}_\Lambda^1(S_\n, S_\m)$ is the first homology group of this complex. Apply $G(\cdot)$ to this chain complex to obtain $\ldots \rightarrow \operatorname{Hom}_\Gamma(S_{\mu(n)}, \Gamma^{n_2}) \rightarrow \operatorname{Hom}_\Gamma(S_{\mu(n)}, \Gamma^{n_1}) \rightarrow \operatorname{Hom}_\Gamma(S_{\mu(\n)}, S_{\mu(\m)}) \rightarrow 0$ (where $G(\Lambda^n)=\Gamma^n$ again follows from \cite[18.49]{Lam2}). $\operatorname{Ext}_\Gamma^1(S_{\mu(\n)},S_{\mu(\m)})$ is the first homology group of this new complex, and since $G(\cdot)$ is an equivalence of categories, $\operatorname{Ext}_\Lambda^1(S_\n, S_\m)=0$ iff $\operatorname{Ext}_\Gamma^1(S_{\mu(\n)},S_{\mu(\m)})=0$. Since $\Lambda$ is quasicommutative, then we obtain that $\Gamma$ is also.
\end{proof}



    

We now consider fixed rings. We begin with a general lemma.

\begin{lemma}\label{qc-quotient}
    If $\Lambda$ is an algebra, $I$ and ideal of $\Lambda$, and $V$ a generalized weight module for $\Lambda/I$, it is generalized weight module for $\Lambda$ under the publlback $\Lambda \onto \Lambda/I$.
\end{lemma}
\begin{proof}
Clearly $\cfs \, \Lambda/I = \{ \m \in \cfs \, \Lambda| \m \supset I \}$, so $\cfs \, \Lambda/I \subset \cfs \, \Lambda$. If as a $\Lambda/I$-module, $V=\bigoplus_{\m \in \cfs \, \Lambda} V(\m)$, then $V$ has the same generalized weight space decomposition as a $\Lambda$-module.

\end{proof}

\begin{lemma}\label{all-the-time}
    Let $\Lambda$ be a Noetherian affine algebra over a field $\k$, and let $W$ be a finite group of algebra automorphisms of $\Lambda$, with $|W|^{-1} \in \k$. If $\mathfrak{M} \in \cfs \, \Lambda$, $\m = \mathfrak{M} \cap \Lambda^W$ is an maximal ideal of finite codimension as well in $\Lambda^W$.
\end{lemma}
\begin{proof}
    We have clearly an algebra map $\phi: \Lambda^W/\m \into \Lambda/\mathfrak{M}$. Since $\Lambda/\mathfrak{M}$ is finite dimensional, so is $\Lambda^W/\m$. It remains to check that $\m$ is a maximal ideal. If not, then we can choose a maximal ideal $\n$ with $\m \subsetneq \n$ and by Theorem \ref{LO}, a prime ideal $\mathfrak{N}$ in $\Lambda$ lying over it. But since $\mathfrak{M}$ is maximal, $\mathfrak{M}=\mathfrak{N}$, and so $\m=\n$ is maximal of finite codimension.
\end{proof}

The following is a generalization to quasicommutative algebras of the main result in \cite{KS} for FCRs.

\begin{theorem}\label{invariants-quasicommutative}
Let $\Lambda$ be a Noetherian affine algebra over a field $\k$, and let $W$ be a finite group of algebra automorphisms of $\Lambda$, with $|W|^{-1} \in \k$. If $\Lambda$ is quasicommutative, then $\Lambda^W$ is quasicommutative.
\end{theorem}

\begin{proof}

By Theorem \ref{MS}, $\Lambda$ is a finitely generated $\Lambda^W$-module, and the later ring is also affine and Noetherian. By Proposition \ref{Fillmore}, we have to show that each finite dimensional $\Lambda^W$-module $M$ is a generalized weight module. By Lemma \ref{qc-quotient}, we can assume $M$ to be a faifhfull $\Lambda^W$-module. Define $M_\Lambda = \Lambda \otimes_{\Lambda^W} M$. As $\Lambda$ is quasicommutative, $M_\Lambda$ is a generalized weight module. Let $\mathfrak{N} \in \operatorname{Supp} \, M$. $\n=\mathfrak{N} \cap \Lambda^W$ belongs to $\cfs \, \Lambda^W$ by Lemma \ref{all-the-time}. Let $v \in M(\mathfrak{N})$. $\{ 0 \}=\mathfrak{N}^{n(v)}v \supset \n^{n(v)} v$. So $M_\Lambda$ is a generalized weight module with respect to $\Lambda^W$. Hence, as $M$ is a  $\Lambda^W$-submodule of $M_\Lambda$, it is also a generalized weight space module.
\end{proof}


With this result, we have also the following remarkable

\begin{theorem}\label{remarkable}
    Let $U$ be an associative algebra and $\Lambda \supset \Gamma$ two quasicommutative algebras such that $\Lambda$ is a finitely generated left and right $\Gamma$-module, with $\Lambda$ and $\Gamma$ Noetherian. $\Lambda$ is a Harish-Chandra subalgebra if and only if $\Gamma$ is a Harish-Chandra subalgebra. Moreover, $\mathbb{H}(U, \Lambda)= \mathbb{H}(U, \Gamma)$. In particular, suppose $\Lambda$ is a Harish-Chandra subalgebra and let $W$ be any group of algebra automorphisms of $\Lambda$ whose order is invertible in $\k$. Then $\Lambda^W$ is a Harish-Chandra subalgebra and $\mathbb{H}(U, \Lambda)=\mathbb{H}(U, \Lambda^W)$. 
\end{theorem}
\begin{proof}
 If $\Lambda$ is a Harish-Chandra subalgebra, given any $u \in U$, $\Lambda u \Lambda$ is a finitely generated left and right module. $\Gamma u \Gamma$ is a sub-$\Gamma$-bimodule of $\Lambda u \Lambda$. Since the later is finitely generated left and right module for $\Lambda$, it is also going to be finitely generated left and right $\Gamma$-module as $\Gamma$ is Noetherian - and hence $\Gamma$ is Harish-Chandra. Suppose now $\Gamma$ Harish-Chandra. By hypothesis, $\Lambda=\sum_{i=1}^s \lambda_i \Gamma$, $\lambda_j \in \Lambda$, and as well $\Lambda=\sum_{i=1}^s  \Gamma \mu_i$, with $\mu_j \in \Lambda$. Let $u \in U$, and consider $\Lambda u \Lambda$. This expression equals $\sum_{i=1}^s \lambda_i (\sum_{j=1}^s \Gamma  u \Gamma \mu_j)$. As $\Gamma$ is Harish-Chandra, there are $v_1, \ldots v_k \in U$ with $\sum_{j=1}^s \Gamma  u \Gamma \mu_j=\sum_{\ell=1}^k \sum_{j=1}^s\Gamma v_\ell \mu_j$. Hence $\Lambda u \Lambda = \sum_{i=1}^s \lambda_i (\sum_{\ell=1}^k \sum_{j=1}^s \Gamma v_\ell \mu_j) \subset  \sum_{j,\ell} \Lambda v_\ell m_j$. Since $\Lambda$ is Noetherian, then $\Lambda u \Lambda$ is a finitely generated left $\Lambda$-module; and similarly on the right.

 Let now $M \in \mathbb{H}(U, \Lambda)$. Since $\Lambda$ is Noetherian and $M$ Harish-Chandra, $\Lambda$ act on it locally finitely; and hence so does $\Gamma$. If on the other hand $M$ is a finitely generated $U$-module where $\Gamma$ acts locally finitely, as $\Lambda$ is a finitely generated $\Gamma$-module, it will also act locally finitely. Hence $\mathbb{H}(U,\Lambda)=\mathbb{H}(U, \Gamma)$.
 
 By the previous theorem, we have that $\Lambda^W$ is quasicommutative. We also have $\Lambda^W$ Noetherian and $\Lambda$ a finitely generated $\Lambda^W$-module by Theorem \ref{MS}. So our last statement follows from what we have done previously. 

\end{proof}

It has been an vaguely stated open problem for more than a decade to find more ``natural" cases of pair algebra/Harish-Chandra subalgebra where $\Gamma$ is not necessarily commutative. Now we have plenty:

\begin{corollary}\label{mostrar}
In the setting of Theorem \ref{invariants-quasicommutative}, $\Lambda^W$ is a Harish-Chandra subalgebra of $\Lambda$. In particular, if $\mathfrak{g}$ is a finite dimensional reductive or nilpotent Lie algebra over an algebraically closed field of characteristic $0$ and $H$ any finite group of algebra automorphisms of $U(\mathfrak{g})$, $U(\mathfrak{g})^H$ is a Harish-Chandra subalgebra of $U(\mathfrak{g})$.
    
\end{corollary}
\begin{proof}
    $\Lambda$ is clearly a Harish-Chandra subalgebra of itself, so we can apply previous Theorem and the fact that $U(\mathfrak{g})$ is a quasicommutative algebra.
\end{proof}

We will see now another application of Theorem \ref{remarkable}, which coupled with Lemma \ref{Berest}, turns out to have a nice consequence for rational Cherednik algebras \cite{EG}.

In their situation, let $\k=\C$ and $W$ be a complex reflection group with representation $h$. We have, fixing $t=1$ and choosing a suitable deformation parameter $\mathfrak{c}$\footnote{which can be interpreted as just a function from the set of complex reflections to $\C$, invariant under $W$-conjugation \cite{EG}.}, we have the rational Cherednik algebras $H_\mathfrak{c}:=H_\mathfrak{c}(W,h)$, which are a universal flat family of deformations of $W_n(\C)*W$; they can also be considered as a double step degeneration of the corresponding DAHA \cite{EG}. We can  make with them an analysis that is similar to $W_n(\C)*G$, using the PBW Theorem for them and the Dunkl embedding \cite{EG}.

    It is well know (e.g, \cite{Vale}), that for most values of $\mathfrak{c}$\footnote{In fact, such values are Weil generic in a suitable affine space.}, the algebra $H_\mathfrak{c}$ will be affine, Noetherian and simple. Things are interesting when we consider values of $\mathfrak{c}$ for which the rational Cherednik algebra has irreducible finite dimensional modules. When $W=S_n$ and $h=\C^{n-1}$, this was done in \cite{BEG}. In this case $\mathfrak{c}$ is just a complex scalar, and finite dimensional irreducible modules exist when $\mathfrak{c}=\pm r/n$, with $r \in \mathbb{N}^+$ coprime with $n$. In this case there are only two irreducible finite dimensional modules with no non-splitting extension (cf. \cite[Theorem 1.2]{BEG}). This has a rather peculiar consequence:

    \begin{proposition}
    When $W=S_n$, $h=\mathbb{C}^{n-1}$ and $\mathfrak{c}$ is as above, $H_\mathfrak{c}(S_n,\mathbb{C}^{n-1})$ is a quasicommutative algebra.
    \end{proposition}

\begin{remark}
It is not true in general rational Cherednik algebras that admit finite dimensional irreducible modules are quasicommutative. For instance, we have non-split extensions for different finite dimensional modules even when $W=\mathbb{Z}_m$, $m \geq 3$. The author would like to thank Pavel Etingof for this simple yet important remark.
\end{remark}

In \cite{Berest} (cf. also \cite{Ginzburg2}) it was introduced th category $\mathcal{O}_{H_\mathfrak{c}}$ of modules for the rational Cherednik algebras: it consists of the finitely generated modules such that $\C[h^*]$ acts locally finitely. Since, by Chevalley-Shephard-Todd Theorem\footnote{It would me more correct, from an historical perspective, to atribute this theorem to Serre as well; for it was him who noticed that Chevalley proof worked not only for reflection groups, but also for pseudo-reflection groups \cite{Serre}.} (see  \cite[Chapter 5, $\oint$ 5, n. 5, Theorem 4]{Bourbaki}), $\C[h^*]$ is a free $\C[h^*]^W$-module of rank $|W|$, this is the same to require that $\C[h^*]^W$ acts locally finitely. By the Lemma \ref{Berest}, $\C[h^*]^W$ is a Harish-Chandra subalgebra of $H_\mathfrak{c}$. But by Theorem \ref{remarkable}, $\C[h^*]$ also is a Harish-Chandra subalgebra.

So category $\mathcal{O}_{H_\mathfrak{c}}$ is in fact a Harish-Chandra category:

\begin{theorem}\label{O-H-C}
    Let $H_\mathfrak{c}$ be a rational Cherendik algebra. Then $\mathcal{O}_{H_\mathfrak{c}}=\mathbb{H}(H_\mathfrak{c}, \C[h^*])$.
\end{theorem}

It is an open problem if $H_\mathfrak{c}$ can be realized as a Galois order with $\Gamma=\C[h^*]$. Such a realization was accomplished, using the methods of Coulomb branches \cite{BFN} for their spherical subalgebras in the case $W$ is of the type $G(m,p,n)$, in \cite{LW}. We remark, however, that in this case $\Gamma$ is a complicated object, not just $\C[h^*]^W$. However, even with $\mathbb{H}(H_\mathfrak{c}, \C[h^*])$ being the same as $\mathbb{H}(H_\mathfrak{c}, \C[h^*]^W)$, it is \emph{impossible} to have a Galois order structure with $\Gamma=\C[h^*]^W$, for this would imply that $\C[h^*]^W$ is maximal commutative with respect to inclusion (combine Theorem \ref{FO-H-bridge} and \cite[Proposition 2.14]{Hartwig}), which is clearly not the case.

\begin{remark}
    What is usually called category $\mathcal{O}$ for a rational Cherednik algebra (cf. \cite{GGOR}) is the block corresponding to $0$ in the decomposition of the above category (\cite{Berest}). We will discuss it in more detail bellow.
\end{remark}

After this interlude in the theory of rational Cherednik algebras, we go back to the general theory of quasicommutative and Harish-Chandra algebras and modules.

\begin{proposition}\label{non-Galois}
    Let $U$ be an associative algebra, $\Lambda$ an affine Noetherian Harish-Chandra subalgebra. Let $W$ be a finite group of automorphisms of $U$ whose order is invertible in $\k$, and such that such that $W(\Lambda) \subset \Lambda$. Assume as well that the group algebra $\k W$ is a subalgebra of $U$.
    
    \begin{enumerate}
    \item $\Lambda^W$ is a Harish-Chandra subalgebra of $eUe$, where $e=1/|W|\sum_{w \in W}$.
    
    \item If $M$ is a Harish-Chandra $U$-module, then $eM$ is a Harish-Chandra $eUe$-module.
    
    \item If $M$ is also uniform Harish-Chandra, then so is $eM$.
    
    \item If every $\Lambda$-generalized weight space for $M$ is finite dimensional, so is every $\Lambda^W$-generalized weight space.\footnote{Studies for which this is always the case for irreducible Harish-Chandra modules are an important theme \cite{DFO} \cite{FO2} \cite{Hartwig2} \cite{Fillmore}. }
    \end{enumerate}
    \end{proposition}
\begin{proof}
     By the Theorems \ref{invariants-quasicommutative} and \ref{remarkable}, $\Lambda^W$ is a Harish-Chandra subalgebra of $U$; and since $eUe$ is a subalgebra of $U$ which contains $\Lambda^W$\footnote{More precisely, it contains $e \Lambda^W e$, but since in $eUe$, $e$ acts as the unity, we are free to make this harmless identification.} it is Harish-Chandra in it also. This shows (1). Now, it is clear that $eM$ is a finitely generated $eUe$-module. Since $M$ is Harish-Chandra $U$-module and $\Lambda$ is Noetherian, this means that $\Lambda$ acts locally finitely on $M$. Since $\Lambda$ contains $\Lambda^W$ as a subalgebra, the later also acts locally finitely on $M$, and hence on $eM$. So we have dealt with (2). Suppose $M$ uniform with respect to $\Lambda$, and let $\n \in \operatorname{Supp} \, (eM)$. Let $\mathfrak{N}$ be a maximal ideal of finite codimension lying over it in $\Lambda$ (cf. Theorem \ref{LO}
     ) belonging to $\operatorname{Supp \, M}$. We have that for some $N>0$, $\mathfrak{N}^N M(\mathfrak{N}) = 0$. Hence $0=e \mathfrak{N}^N M(\mathfrak{N})=\n^N e M(\mathfrak{N}) \supset \n^N (eM)(\n)$ as well. This deals with (3).  Finally, let $\n \in \cfs \, \Lambda^W$. Let $\mathfrak{N}$ lie over it in $\Lambda$. $(eM)(\n) \subset e(\sum_{w \in W} M(w.\mathfrak{N}))$, where we are using Theorem \ref{LO}. The later is finite dimensional. Hence (4), and we are done.
\end{proof}

With essentially the same proof, we have also the following abstracted version of previous Proposition.

\begin{proposition}\label{abstract}
Let $U$ be an associative algebra, $\Lambda$ an affine Noetherian Harish-Chandra subalgebra. Let $e$ be an idempotent of $U$ such that $e\Lambda = \Lambda e$, and such this is a finitely generated $e \Lambda e$-module.

    \begin{enumerate}

    \item $e \Lambda e$ is Noetherian.
    
    \item $e\Lambda e$ is a Harish-Chandra subalgebra of $eUe$.
    
    \item If $M$ is a Harish-Chandra $U$-module, then $eM$ is a Harish-Chandra $eUe$-module.
    
    \item If $M$ is also uniform Harish-Chandra, then so is $eM$.
    
    \end{enumerate}
\end{proposition}
\begin{proof}
    Item (1) is \cite[21.11]{Lam}. The rest is the same as in previous Proposition.
\end{proof}

\begin{corollary}\label{invariants-strong-HC}
In the notation and conditions of Proposition \ref{non-Galois}, if $\Lambda$ is a strong Harish-Chandra subalgebra of $U$, then $\Lambda^W$ is a strong Harish-Chandra subalgebra of $eUe$.
\end{corollary}
\begin{proof}
    By the above Proposition, $\Lambda^W$ is a Harish-Chandra subalgebra of $eUe$.
    Let $\mathfrak{M} \in \cfs \, \Lambda$, and $\m = \mathfrak{M} \cap \Lambda^W \in \cfs \, \Lambda^W$. $eUe/ eUe \m$ is submodule of $e(U / U \mathfrak{M})$. The later module belongs, by the above Proposition, to $\mathfrak{H}(eUe, \Lambda^W)$. Hence, its submodule $eUe/ eUe \m$ is also a uniform Harish-Chandra module. Since all quotients $eUe/U \m$ appear in this way (cf. Theorem \ref{LO} and Lemma \ref{all-the-time}), and a similar argument works for right modules, we have that $\Lambda^W$ is a strong Harish-Chandra subalgebra.
\end{proof}

The following is the main result of this section (or this paper in general). Succinctly, if we have an algebra and an ``spherical subalgebra" that are Morita equivalent and have compatible Harish-Chandra subalgebras, then the Harish-Chandra module categories will be equivalent as well. It is also partially a generalization of some results in \cite{Webster} and \cite{Hartwig2}.

Before we state it, point out a lemma that, despite its simplicity, we found very worth pointing out.

\begin{lemma}
    Let $U$ be an associative algebra and $\Lambda$ a Harish-Chandra subalgebra of it. For every element $u$ of $U$, $\Lambda u$ is contained in a finitely generated right $\Lambda$-submodule of $U$.
\end{lemma}
\begin{proof}
    Clearly $\Lambda u \subset \Lambda u \Lambda$. The later is a finitely generated $\Lambda$-right submodule, as $\Lambda$ is Harish-Chandra
\end{proof}

\begin{theorem}\label{Morita-HC}
    In the setting of the Proposition \ref{non-Galois}, let $\mathbb{H}(U,\Lambda)$ be the category of Harish-Chandra modules for the pair $(U,\Lambda)$, and let $\mathbb{H}(eUe,\Lambda^W)$ be the category of Harish-Chandra modules for the pair $(eUe,\Lambda^W)$. We have a functor $F(\cdot):\mathbb{H}(U,\Lambda) \rightarrow \mathbb{H}(U^W, \Lambda^W)$, $F(M)=eM$ that is exact, where $e$ is the same idempotent as before; and we have a  functor $G(\cdot):=Ue \otimes_{eUe} (\cdot): \mathbb{H}(eUe, \Lambda^W) \rightarrow \mathbb{H}(U,\Lambda)$. If $UeU=U$, then $G$ is exact and then the pair  $(F, G)$ gives us an equivalence of categories $\mathbb{H}(U,\Lambda)$ and $\mathbb{H}(eUe,\Lambda^W)$.
\end{theorem}
\begin{proof}
    In the previous Proposition we obtained a functor $F:\mathbb{H}(U,\Lambda) \rightarrow \mathbb{H}(eUe, \Lambda^W)$. By Theorem \ref{Morita}, it is naturally isomorphic with tensoring with a projective module, hence it is exact. If $M$ is a Harish-Chandra module for $eUe$, we can define a functor $G(\cdot): \mathbb{H}(eUe,\Lambda^W) \rightarrow U-\operatorname{Mod}$ given by $G(M)= Ue \otimes_{eUe} M$. We need to show that $\Lambda m^*$ is finite dimensional for every $m^* \in G(M)$ in order to have $G(M)$ a Harish-Chandra module. $m^*$ is a linear combination of elements of the form $u \otimes m$, where $u \in Ue$ and $m \in M$. $\Lambda= \sum \lambda_i \Lambda^W$, for a finite number of fixed $\lambda_i \in \Lambda$. So $\Lambda.(u \otimes m)=\sum \lambda_i \Lambda^W(u \otimes m)=\sum \lambda_i \Lambda^We(u \otimes m)$\footnote{Remember that, under our harmless identification $e \Lambda^W e \equiv \Lambda^W$, $\Lambda^W e= \Lambda^W e e= e \Lambda^W e = \Lambda^W$.}$=\sum \lambda_i \Lambda^W(v \otimes m)$, where $v=eu \in eUe$. By the previous Lemma, $\sum \lambda_i \Lambda^W (v \otimes m)$ is a subspace of $ (\sum \lambda_i  (\sum v_\ell \Lambda^W)) \otimes  m$, for some finite number of $v_\ell \in eUe$. Call this vector space $V$. $V= (\sum \lambda_i (\sum v_\ell )) \otimes \Lambda^W m$. Since $M$ is a Harish-Chandra module for $eUe$ and $\Lambda^W$ is a Harish-Chandra subalgebra, $V$ is finite dimensional. So $\Lambda (u \otimes m)$ is also finite dimensional and $G(M)$ is a Harish-Chandra module for $U$ with respect to $\Lambda$. Now suppose $UeU=U$. Then $eUe$ and $U$ are Morita equivalent by Theorem \ref{Morita3}, and $Ue$ is a right projective $eUe$-module, which readly implies that $G$ is exact. The equivalence of the module categories given by the same functors $F$ and $G$ as here. We have, given the Morita equivalence, $F \circ G$ is naturally isomorphic to the identity functor of $U-\operatorname{Mod}$; and $G \circ F$ naturally isomorphic to the identity functor in $eUe$. These natural isomorphisms, of course, persists when we restrict to the Harish-Chandra module categories. Hence, this restriction gives us the desired equivalence.
\end{proof}

We recall that, given a rational Cherednik algebra $H_\mathfrak{c}$, its spherical subalgebra is $U_\mathfrak{c}:=e H_\mathfrak{c} e$. We can introduce a category $\mathcal{O}_{U_\mathfrak{c}}$ for the spherical subalgebra by considering the finitely generated modules where $\mathbb{C}[h^*]^W$ act locally finitely. It is known that $H_\mathfrak{c} e H_\mathfrak{c}=H_\mathfrak{c}$ if and only if $\mathfrak{c}$ is generic (and the Cherednik algebra is then simple), if and only if $H_\mathfrak{c}$ and $U_\mathfrak{c}$ are Morita equivalent (\cite{EG}; see also \cite{Vale} \cite{Bellamy0})  By the previous result, we easily get

\begin{corollary}
    When $\mathfrak{c}$ is generic, the categories $\mathcal{O}_{H_\mathfrak{c}}$ and $\mathcal{O}_{U_\mathfrak{c}}$ are equivalent module categories.
\end{corollary}

We finish that we can obtain an analogue of Theorem \ref{Morita-HC} if we work in the setting of Proposition \ref{abstract}: just replace, in the statement and proof of Theorem \ref{Morita-HC} $\Lambda^W$ by $e\Lambda e$ everywhere. To state it formally:

\begin{theorem}\label{abstract2}
    In the setting of Proposition \ref{abstract}, we have the same result as that of Theorem \ref{Morita-HC}, with the same proof, by replacing every ocurrence of $\Lambda^W$ by $e \Lambda e$.
\end{theorem}



\section{Galois rings and orders II}

We now specialize our discussion for Harish-Chandra categories $\mathbb{H}(U, \Gamma)$ where $U$ is a Galois $\Gamma$-ring in the fixed ring $\K$ of an adequate skew monoid ring $\mathcal{L}$.

\subsection{General results}

By the main results of \cite{FO2}, we are free to use the results just obtained in the previous section.

Our first objective is to show that, as long $U$ is a Galois $\Gamma$-order and $\Gamma$ is a integrally closed and Harish-Chandra subalgebra, which is the minimal condition for the the two settings for Galois rings in \cite{FO} and \cite{Hartwig} to coincide, they do coincide.

\begin{lemma}
    Let $\Gamma$ be a domain, $K$ its field of fractions, $L$ a finite extension of $K$ and $\Lambda$ the integral closure of $\Gamma$ in $L$. Then $\Frac \, \Lambda = L$.
\end{lemma}
\begin{proof}
    Let $x \in L$. As $L$ is algebraic over $K$, after clearing denominators, we have that $x$ satisfies a polynomial with coefficients in $\Gamma$. If $a$ is the leading coefficient of this polynomial, $ax$ is clearly integral over $\Gamma$, and hence for some $y \in \Lambda$, $ax=y$. This shows the statement.
\end{proof}

\begin{theorem}\label{FO-H-bridge}
    \emph{(FO-H bridge)} Let $U$ be a Galois $\Gamma$-ring in $\mathcal{K}=\mathcal{L}^G$, $\mathcal{L}=L*\mathcal{M}$. If $\Gamma$ is a Harish-Chandra subalgebra of $U$ and an integrally closed domain, then $U$ satisfies the Hartwig conditions with $\Lambda$ the integral closure of $\Gamma$ in $L$.  
\end{theorem}
\begin{proof}
    By \cite[Proposition 5.1]{FO}, $\mathcal{M}.\Lambda=\Lambda$. Let us show now that $G(\Lambda) \subset \Lambda$. If $x \in \Lambda$ satisfies $x^n + \sum_{i=0}^{n-1} \gamma_i x^i=0$, applying any $g \in G$ gives us, since $\Gamma \subset K=L^G$, that $(g.x)^m + \sum_{i=0} \gamma_i (g.x)^i=0$. So $g.x \in \Lambda$. $\Lambda^G$ is integrally closed \cite[Lemma 2.1]{Hartwig} and $\Frac \, \Lambda^G=L^G=K$ by the previous lemma. Hence $\Gamma \subset \Lambda^G \subset K$, and each element of $\Lambda^G$ is killed by a monic polynomial with coefficients in $\Gamma$, by definition. As $\Gamma$ is its own integral closure in $K$, $\Lambda^G=\Gamma$.
\end{proof}

In this way, we can move freely between the setting of \cite{FO} and that of \cite{Hartwig}. It is also worth remarking that the condition for a pair algebra/subalgebra be a Galois ring in the sense of Futorny-Ovsienko \cite{FO} and in the sense of Hartwig \cite{Hartwig} does not depend on $U$, $\Lambda$, neither on the embedding on th skew group ring, but just on $\Gamma$.

\medskip

In \cite{Webster}, a notion of flag order was introduced. We will not define it, since we will not use it. However, we point out that we have the following improvement of \cite[Lemma 2.3]{Webster}.

\begin{theorem}\label{non-principal-flags}
    Let $U$ be a Galois order over $\Lambda$ in $L * (\mathcal{M}\rtimes W)$, where $L= \Frac \, \Lambda$, $W$ is a finite group of $\operatorname{Aut}_k \, \Lambda$ and $\mathcal{M}$ a subgroup of $\operatorname{Aut}_k \, \Lambda$, such that $W$ that acts faithfully on $\mathcal{M}$, normalizing it and $\mathcal{M} \cap W = 1$. Let $e=1/|W| \sum_{w \in W} w$. Then $eUe$ is a Galois order over $\Lambda^W$ in $(L*\mathcal{M})^W$.
\end{theorem}
\begin{proof}
    By \cite[Lemma 2.1]{Montgomery}, $e \Lambda e \simeq \Lambda^W$ and $e (L*(\mathcal{M} \rtimes W))e \simeq (L*\mathcal{M})^W$. By hypothesis, $LU=UL=L*(\mathcal{M} \rtimes W)$, and hence $L^W e Ue = e L^W U e = e L U e= e U L e= e U e L^W = (L*\mathcal{M})^W$.  If we show that $\mathcal{M}$ is separating for $L^W$, we will have that $eUe$ is Galois $\Lambda^W$-ring. But this follows from our hypothesis, since $\mathcal{M} \cap W = 1$, using Lemma \ref{separating}

    We will show now that $eUe$ is a Galois order using the definition. So let $V$ be a finite dimensional left vector space for $L^W$ in $(L*\mathcal{M})^W$. Let $V'=LV$ be the finite dimensional left vector space generated by $V$ over $L$ in $L*(\mathcal{M} \rtimes W)$. Since $U$ is a Galois order, $V' \cap U$ is a finitely generated left $\Lambda$-module, and hence a finitely generated left $\Lambda^W$-module. $V \cap eUe$ is a $\Lambda^W$-submodule of $V' \cap U$, and since $\Lambda^W$ is Noetherian, it is also finitely generated as a left module. Arguing similarly on the right, we have that $eUe$ is a Galois order. If $\Lambda$ is Harish-Chandra in $U$ then $\Lambda^W$ is Harish-Chandra in $eUe$ by Proposition \ref{non-Galois}.
\end{proof}

Principal Galois orders are important because of:

\begin{proposition}
    If $U$ is a principal $\Gamma$-order, and $\Gamma$ is integrally closed, for each $m \in \operatorname{Specm} \Gamma$, we have a \emph{canonical} simple Harish-Chandra module with $m$ in its support.
\end{proposition}
\begin{proof}
    This follows from \cite[Theorem 3.3, Definition 3.4]{Hartwig}, and Theorem \ref{FO-H-bridge}.
\end{proof}



We finish this section with an important Proposition that will be very handy later on.

\begin{proposition} \label{main-prop}
Let $U$ be a Galois algebra over $\Gamma$, and $G$ be a finite group of automorphism of $U$ such that $G(\Gamma) \subset \Gamma$. If $U^G$ is Galois ring over $\Gamma^G$, then $U^G$ is a principal Galois order.

\end{proposition}

\begin{proof}
    Call $K = \Frac \, \Gamma$. Since $U^G \subset U$ and the later is a principal Galois order, then $U^G(\Gamma) \subset \Gamma$. On the other hand, by \cite[Lemma 2.19]{Hartwig}, $U^G(K^G) \subset K^G$. Hence the claim follows.
\end{proof}

\subsection{Multiplicative invariant theory of the algebra of differential operators on the torus}

We begin this subsection explaining, for the sake of completeness, what multiplicative invariant theory is. In classical invariant theory, you have a finite dimensional vector space $V$, a subgroup $G$ of $\operatorname{GL}(V)$ and ask questions about the invariants of $S(V)$. For a great introduction to subject, we refer to \cite{DK}.

In multiplicative invariant theory, we have a finite rank lattice $M$ (that is, a free abelian group of finite rank), and a subgroup $G  < \operatorname{GL}(M)$. In this context, $M$ is called a $G$-lattice. The group ring $\k[M]$, which is clearly isomorphic to a ring of Laurient polynomials, carries a natural faithfull $\k$-algebra action of $G$, also sometimes called monomial action, and the question center around the structure of $\k[M]^G$. The standard reference for this topic is \cite{Lorenz}.

\begin{definition}\label{invariant-differential-operators}
    Let $X$ be an affine irreducible affine variety with the action of a finite group $G$. Then the action of $G$ extends to an action on the ring of differential operators by conjugation: if $D \in \mathcal{D}(X), f \in \mathcal{O}(X), g \in G$, $g.D(f):= g(D(g^{-1}f))$. 
\end{definition}

In \cite[Section 4.4]{FS3} we considered certain invariants of differential operators on the torus. We can reinterpret what was done there using the notion of multiplicative invariants.

Consider, for instance, the action of $B_n$ on $\mathcal{D}(\k^{\times n})$ considered in \cite{FS3}. $\Z^n$ has the structure of a $B_n$-lattice, which is given by just signed permutations. Then we recover the action on the ring of differential operator on the torus, using Definition \ref{invariant-differential-operators}, by considering the $B_n$ action on $\mathcal{D}(\k[\Z^n])=\mathcal{D}(\k^{\times n})$.

Let's expand now the technical details.

Consider the ring of differential operators on the 1-torus $\mathcal{D}(\k[x^{\pm 1}])$. This is the algebra generated over $\k$ by $x,x^{-1}$ and $\partial=d/dx$. Call $t=\partial x$. We also have that the ring of differential operators on $\k^\times$ is the algebra generated $x,x^{-1}, t$. We have an isomorphism $f: \mathcal{D}(k[x^{\pm 1}]) \rightarrow k[t]* \mathbb{Z}$, given by $x \mapsto \varepsilon$, $x^{-1} \mapsto -\varepsilon$, $t \mapsto t$, where the action is $\varepsilon(t)=t-1$ \cite{FO}.

Generalizing:

\begin{proposition}[\cite{FO}]\label{diff-ops}
    Let $\mathcal{D}(\k^{\times n})=\mathcal{D}(k[x_1^{\pm 1}, \ldots, x_n^{\pm n}])$ be the algebra of differential operators on the n-torus. Call $t_i=\partial_i x_i$.
    $\mathcal{D}(\k^{\times n})$ is generated, as an algebra, by $x_1^{\pm 1}, \ldots, x_n^{\pm n}, t_1, \ldots, t_n$.

    There is an isomorphism 

    \[ f: \mathcal{D}(\k^{\times n}) \rightarrow k[t_1,\ldots, t_n]*\mathbb{Z}^n \]

    which sends $x_i$ to $\varepsilon_i$, $x_i^{-1}$ to $-\varepsilon_i$, $t_i$ to $t_i$, with the action of $\varepsilon_i$ being $\varepsilon_i(t_j)=t_j - \delta_{ij}$. Hence $\mathcal{D}(k^{\times n})$ is a principal Galois order over $\Gamma=k[t_1,\ldots,t_n]$ in $k(t_1,\ldots,t_n)*\mathbb{Z}^n$.
\end{proposition}
\begin{proof}
    The only thing left to show is that is the ring of differential operators is a principal Galois order. But this is clear, as in the embedding $f$ there are no denominators.
\end{proof}

Our generalization is as follows. Suppose $L$ is a $G$-lattice, for a finite group $G$. We have $\mathcal{D}(\k[L])^G \simeq (\k[t_1,\ldots,t_n]*\mathbb{Z}^n)^G$. It is clear that $G(\mathbb{Z}^n)=\mathbb{Z}^n$; and since the action of $G$ on a differential operator is, by definition, an action by conjugation, and since the isomorphism $\mathcal{D}(\k[L]) \simeq \k[t_1,\ldots,t_n]*\mathbb{Z}^n$ is $G$-equivariant, we also have that $G$ acts on $\mathbb{Z}^n$ by conjugation.

\begin{theorem}\label{multiplicative-Galois}
    The invariant of the ring of differential operators on the torus $\mathcal{D}(\k[L])^G$ is a principal Galois order in $(\k(t_1,\ldots,t_n)*\mathbb{Z}^n)^G$ if $G(\k[t_1,\ldots,t_n]) \subset \k[t_1,\ldots,t_n]$, in which case the Harish-Chandra subalgebra is $k[t_1,\ldots,t_n]^G$.
\end{theorem}
\begin{proof}
    It is clear that we have $\mathcal{D}(\k[L])^G \simeq (\k[t_1,\dots,t_n]*\Z^n)^G \into (\k(t_1,\ldots,t_n)*\Z^n)^G$. Before proceeding, we recall that the ring of differential operators on any smooth variety is simple Noetherian finitely generated algebra and a domain (\cite[Chapter 15]{McConnell}). By Theorem \ref{MS}, $\mathcal{D}(\k[L])^G$ is a finitely generated algebra, by a finite set of generators $S$. Consider the elements $X_i^{\pm}=\sum_{g \in G} g(x_i^{\pm 1}), i=1,\ldots,n$. Adjoin them to the set $S$ to obtain a set $S'$, The support of the elements of $S'$ generate $\Z^n$ as a monoid, and so by Proposition \ref{prop-supports}, $\mathcal{D}(\k[L])^G$ is a $\Gamma^G$-ring in  $(\k[t_1,\dots,t_n]*\Z^n)^G$, where $\Gamma=\k[t_1,\dots.t_n]$. By the previous Proposition \ref{diff-ops} and Proposition \ref{main-prop}, $\mathcal{D}(\k[L])^G$ is a principal Galois order with Harish-Chandra subalgebra $\Gamma^G$.
\end{proof}

A natural question is, then, when do we have
$G(\k[t_1, \ldots, t_n]) \subset \k[t_1, \ldots, t_n]$? To decide on this question, we note the following elementary but useful proposition:

\begin{proposition}
    $\mathcal{D}(\k^{\times n})$ has a basis given by monomials of the form $x^\alpha y^\beta$, $\alpha \in \Z^n$, $\beta \in \mathbb{N}^n$. $\k[t_1, \ldots, t_n]$ coincides with the vector space generated by the monomials such that $\alpha_i + \beta_i$ is a nonnegative even integer, for $i=1, \ldots, n$.
\end{proposition}

As an ilustration (cf. \cite{FS3}), $x^2 y x^{-1} \in \mathcal{D}(\k^\times)$ is an element of $\k[t]$ by the above criterion: indeed, $x^2 y x^{-1}=[x^2, y] x^{-1}+yx^2 x^{-1}=-2+t$. The general problem of understanding the finite subgroups of $\operatorname{GL}_n(\mathbb{Z})$, however, is very difficult (cf. \cite{Lorenz}).

We discuss now the Gelfand-Kirillov Hypothesis and its q-analogue (cf. \cite[I.2.11, II.10.4]{Brown}). In \cite{FS} it was shown that, in case we have a linear action of a finite group $G$ on a finite dimensional vector space, that if $V/G$ is a rational variety, then $\Frac \, D(V)^G \simeq \Frac \, W_n(\k)$, where $n=\operatorname{dim}(V)$. In \cite{SchwarzPan} this was generalized to arbitrary group actions, so in particular, if $\k(L)^G$ is rational, then, calling $n=\operatorname{rank}(L)$,  $\Frac \, \mathcal{D}(\k[L])^G \simeq \Frac \, W_n(\k)$. We can obtain also a different result in the same vein, generalizing \cite[Theorem 3]{EFOS}. First we will generalize \cite[Theorem 6.1]{FS}.

\begin{theorem}\label{GK-1}
    Let $L=\k(x_1,\ldots,x_n)$, $\Z^n$ a group of automorphisms of $L$ such that $\varepsilon_i(x_j)=x_j-\delta_{ij}$. Let $G$ be \emph{any} finite group of automorphisms of $L$, which acts by conjugation on $\mathbb{Z}^n$, normalizing it. If $L^G$ is a purely transcendental extension, and $U$ is a Galois ring in $(L*\Z^n)^G$, then $\Frac \, U \simeq \Frac \, W_n(\k)$.
\end{theorem}
\begin{proof}
    By  \cite[Theorem 4.1(4)]{FO}, we have that $\Frac \, U \simeq \Frac \, (L*\Z^n)^G$. As is well known (cf. \cite{FMO}), $L*\Z^n$ is just a localization of the Weyl algebra. Hence $\Frac \, (L*Z^n)^G = \Frac \, W_n(\k)^G$. The result now follows from \cite[Theorem 0.12]{SchwarzPan}.
\end{proof}

\begin{corollary}
    In the setting of Theorem \ref{multiplicative-Galois}, if $\k(t_1,\ldots,t_n)^G$ is a purely transcendental extension, then $\Frac \mathcal{D}(\k^{\times n})^G \simeq \Frac W_n(\k)$.
\end{corollary}

We notice that we have two very different phenomena here. $\mathcal{D}(\k^{\times n})$ has two very distinct maximal commutative subalgeras: $\mathcal{O}(\k^{\times n})$, which is the algebra of Laurient polynomials, which is also ad-nilpotent; and $\Gamma = \k[t_1,\ldots,t_n]$, which is not ad-nilpotent.

By the general theme ``Noether's problem implies its noncommutative analogue" (cf. \cite{FS}, \cite{SchwarzPan}) the rationality of $\k(L)^G$ implies that $\Frac \, \mathcal{D}(\k[L])^G \simeq \Frac \, W_n(\k)$. The fact that Noether's problem for $\Gamma^G$ implies the Gelfand-Kirillov Conjecture for $\mathcal{D}(\k[L])^G$ is also related to such ideas, but use in an essential way the realization of $\mathcal{D}(\k^{\times n})$ as a Galois $\Gamma$-ring and the fact that if we have an associative $U$ Galois $\Gamma$-ring in $\K$, then $\Frac \, U \simeq \Frac \, \K$ (cf. \cite[Theorem 4.1(1)]{FO}). It seems an interesting question to be explored further how the rationality of $\k[x_1, \ldots, x_n]^G$ (or $\k[x_1^\pm, \ldots, x_n^\pm]^G)$ and that of $\Gamma^G$ are related.

\subsection{Fixed rings of GWA}

In this subsection, we will show that given any tensorial GWA (cf. Definition \ref{tensorial}), its invariants under a permutation action and its invariants under the action of the complex reflection groups of type $G(m,p,n)$ are principal Galois orders. In particular, our results will generalize all those of \cite{FS2} and \cite{FS3}. We will also discuss freeness over the Harish-Chandra subalgebra and their skew field of fractions. In this subsection, all algebras are commutative over an algebraically closed field of zero characteristic.

\subsubsection{Permutation groups}

Let $D$ be a finitely generated algebra which is a domain, $D(a,\sigma)$ a rank 1 GWA with infinite order automorphism, and let $D(a,\sigma)^n$ be the algebra $D(a,\sigma)^{\otimes n}$. The base ring of $D(a,\sigma)^n$ is $D^{\otimes n}$, and the other parameters are as described in Proposition \ref{prop-basic-GWA}. Call $F = \Frac \, D^{\otimes n}$.

We have a natural action of $S_n$ on $D(a,\sigma)^n$: $\pi X_i^{\pm}=X_{\pi(i)}^\pm; \pi (d_1 \otimes \ldots \otimes d_n) =(d_{\pi(1)} \otimes \ldots \otimes d_{\pi(n)} $, $\pi \in S_n$. The embedding of Theorem \ref{theorem-GWA}, $D(a,\sigma)^n \into F*\mathbb{Z}^n$ ,is $S_n$-equivariant, and the $S_n$-action on $\mathbb{Z}^n$ is by conjugation. Hence

\begin{theorem}\label{permutation}
    Let $G \leq S_n$, and consider the natural action by algebra automorphisms on $D(a,\sigma)^n$. Then $(D(a,\sigma)^n)^G$ is a principal Galois order in $(F*\Z^n)^G$ 
\end{theorem}
\begin{proof}
    Again by Theorem \ref{MS}, the invariant subalgebra $(D(a,\sigma)^n)^G$ is finitely generated as algebra by a finite set $S$, which we can harmless assume that contains the elements $\sum_{i=1}^n X_i^+$ and $\sum_{i=1}^n X_i^-$. Hence the support of the elements of $S$ generates $\Z^n$ as monoid, and by Proposition \ref{prop-supports}, $(D(a,\sigma)^n)^G$ is a $\Gamma$-Galois ring, where $\Gamma=(D^{\otimes n})^G$, in $(F*\Z^n)^G$. By Theorem \ref{theorem-GWA} and Proposition \ref{main-prop}, we have that the fixed subalgebra is a principal Galois order.
\end{proof}

In general, the theory of D-modules in focused on differential operators over smooth varieties (cf. \cite{Hotta}). But our study reveal that in certain cases we have a nice module category (i.e., Harish-Chandra) for rings of differential operators on singular varieties.

We remark that, it follows from Chevalley-Shephard-Todd Theorem that if $G<\operatorname{GL}_n(\k)$ is not a pseudo-reflection group, then the variety $\k^n/G$ is singular. We also have a famous result due to Levausseur \cite[Theorem 5]{Levasseur}: if $G$ does not contain any pseudo-reflections, we have $W_n(\k)^G \simeq \mathcal{D}(\k^n/G)$.

Hence

\begin{corollary}
    If $G \leq S_n$ is a finite permutation group that does not contain any pseudo-reflection, then $\mathcal{D}(\k^n/G)$ is a principal Galois order with Harish-Chandra subalgebra $\k[t_1,\ldots,t_n]^G$ in $(\k(t_1,\ldots,t_n)*\mathbb{Z}^n)^G$
\end{corollary}
\begin{proof}
    This follows from the above discussion and Theorem \ref{permutation}, as $W_n(\k)=W_1(\k)^{\otimes n}$.
\end{proof}

As an example of the above Corollary, noted first in \cite{FS3}, calling $\mathcal{A}_n$ the alternating groups with their usual permutation actions, $W_n(\k)^{\mathcal{A}_n} \simeq \mathcal{D}(\k^n/\mathcal{A}_n)$. The category of Harish-Chandra modules provides a natural category of $D$-modules for this singular variety.

\subsubsection{Complex reflection groups}

Let's remind the construction of the irreducible complex reflection groups $G(m,p,n)$. Let $G_m \subset \k$ be the cyclic group in $m$ elements of m-th roots of unity. Let $A(m,p,n)$, with $p|m$, be the subgroup of $G_m^{\times n}$ consisting of elements $(h_1, \ldots, h_n)$ such that $(\prod_{i=1}^n h_i)^{m/p}=id$. If we take the semi-direct product $A(m,p,n) \rtimes S_n$, where $S_n$ permutes the entries of $A(m,p,n)$, we obtain the irreducible complex reflection groups $G(m,p,n)$. In this section we will be interested, first, in the cyclotomic case, where $p=1$. In case of Weyl algebras, this was the main content of \cite{FS3}.

If $D(a,\sigma)$ is a rank 1 GWA with infinite order automorphism, it is clear that the action of $G_m$ described in Theorem \ref{JW} and its result can be generalized immediately:

\begin{proposition}
    Let $G_m$ acts on the tensorial GWA $D(a,\sigma)^n$ diagonally. Then $(D(a,\sigma)^n)^G$ is again a tensorial GWA: $(D(a*,\sigma^m))^n$, where we are using the same notation as in Theorem \ref{JW}. Hence $(D(a,\sigma)^n)^G$ is a principal Galois order with $\Gamma=D^{\otimes n}$ in $F*\Z^n$, where $F= \Frac D^{\otimes n}$.
\end{proposition}
\begin{proof}
    Follows from the combination of Proposition \ref{prop-basic-GWA}, Theorem \ref{theorem-GWA} and Theorem \ref{JW}.
\end{proof}

Through this section we will keep the notation $D(a*,\sigma^m)$ for the invariants $D(a,\sigma)^{G_m}$ of a rank 1 GWA.

\begin{theorem}\label{cyclotomic}
    Consider a tensorial GWA $D(a,\sigma)^n$ and its fixed ring $(D(a,\sigma)^n)^{G(m,1,n)}$. Then, in the notation of the previous Proposition, $(D(a,\sigma)^n)^{G(m,1,n)}$ is principal Galois order in $(F*\Z^n)^{S_n}$ with Harish-Chandra subalgebra $(D^{\otimes n})^{S_n}$.
\end{theorem}
\begin{proof}
    The first step is the easy remark that if $R$ is a ring, $G$ a group of ring automorphisms of it and $N$ a normal subgroup of $G$, then $R^G \simeq (R^N)^{G/N}$. So $(D(a,\sigma)^n)^{G(m,1,n)} \simeq (D(a*,\sigma^m)^n)^{S_n}$. We can now apply Theorem \ref{permutation} to conclude.
\end{proof}

We now generalize this result to all groups $G(m,p,n)$, which we remark is a subgroup of $G(m,1,n)$. We need first a lemma, whose proof is essentially the same as of \cite[Proposition 30]{FS3}.

\begin{lemma}
$(D(a,\sigma)^n)^{G(m,p,n)}=\bigoplus_{i=0}^{p-1} (X_1^+ \ldots X_n^+)^{mp/k} (D(a,\sigma)^n)^{G(m,1,n)}$    
\end{lemma}

\begin{theorem}\label{main-objective}

$(D(a,\sigma)^n)^{G(m,p,n)}$ is a principal Galois order with Harish-Chandra subalgebra $(D^{\otimes n})^{S_n}$ inside $(F *\mathbb{Z}^n)^{S_n}$. If $D=\k[h]$, then the algebra is a rational and co-rational Galois order.
\end{theorem}
\begin{proof}
    Since the action of $G(m,1,n)$ normalizes $\mathbb{Z}^n$ and is separating, the same can be said about the action of $G(m,p,n)$. Moreover the elements $\sum_{i=1}^n (X_i^+)^n$ and $\sum_{i=1}^n (X_i^-)^n$ belongs to $(D(a,\sigma)^n)^{G(m,p,n)}$ and their supports generate $\mathbb{Z}^n$ as a monoid. Hence we have that $(D(a,\sigma)^n)^{G(m,p,n)}$ is a Galois $(D^{\otimes n})^{S_n}$-ring in $(F *\mathbb{Z}^n)^{S_n}$. Since as we just saw $D((a,\sigma)^n)^{G(m,1,n)}$ is a principal Galois order, calling $\Gamma=(D^{\otimes n})^{S_n}$, by the previous Lemma $D((a,\sigma)^n)^{G(m,p,n)}(\Gamma) = \bigoplus_{i=0}^{p-1} (X_1^+ \ldots X_n^+)^{mp/k} (D(a,\sigma)^n)^{G(m,1,n)}(\Gamma)  
\subset \Gamma$ also, and so it is also a principal Galois order. The last claim is clear as there are no denominators in our embedding into $F*\Z^n$.
\end{proof}

As a corollary we obtain a generalization of the main result from \cite{FS3}.

\begin{corollary}\label{Gabi}
    Consider $G(m,p,n)$ with its natural action on $\k^n$, and extend it to algebra automorphisms of $W_n(\k)$ and $\mathcal{D}(\k^{\times n})$\footnote{the latter is just the localization of the Weyl algebra at the multiplicatively closed set generated by $x_1 x_2 \ldots x_n$}. This makes $W_n(\k)^{G(m,p,n)}$ and $\mathcal{D}(\k^{\times n})^{G(m,p,n)}$ principal Galois orders over the Harish-Chandra subalgebra $\k[t_1,\ldots,t_n]^{S_n}$ in $(\k(t_1,\ldots,t_n)*\Z^n)^{S_n}$. They are, in fact, rational and co-rational Galois orders.
\end{corollary}


The same result was obtained in \cite{LW} in the case of the Weyl algebras in the broader context of rational Cherednik algebras, but our method relying on invariant theory of generalized Weyl algebras is new and elementary.

We remark that the combination of the last Corollary, the main result of \cite{FS} and \cite[Theorem 4.1(4)]{FO} gives a new proof of the noncommutative Noether's problem for the complex reflections group of type $G(m,p,n)$, which is a particular case of the main result of \cite{EFOS}.

Finally, similar to \cite[Problem 5.1]{H0}, we have:

\begin{problem}
    Is it possible realize $W_n(\k)^G$ as a Galois ring or order, when $G$ is one of the 34 exceptional complex reflection groups?
\end{problem}

\subsection{Freeness and Gelfand-Kirillov Hypothesis}

It is an important question the the theory of Galois orders to decide when the algebra $U$ is a free left and right module over its Harish-Chandra subalgebra. A relevant theorem in this direction is a famous result by H. Bass \cite{Bass}, which we quote here in a form that is sufficient for our needs.

\begin{proposition}\label{Bass}
    If $\Gamma$ is a finitely generated algebra over $\k$ which is also a domain, and $M$ is a non-finitely generated projective $\Gamma$-module, then $M$ is actually free.
\end{proposition}

We are going to recall an useful lemma (\cite[Lemma 11]{FS3})

\begin{lemma}\label{main-lemma-1}

Let $U$ be an associative algebra and $\Gamma \subset U$ an affine commutative subalgebra. Let $G$ be a finite group of automorphisms of $U$ such that $G(\Gamma) \subset \Gamma$. If $U$ is a projective $\Gamma$-module and $\Gamma$ is a projective $\Gamma^G$-module, then $U^G$ is a projective $\Gamma^G$-module.
    
\end{lemma}

The two main algebras that appear as the base ring of a tensorial GWA are the polynomial algebra $\Lambda:=\k[h_1, \ldots, h_n]$, and the algebra of Laurient polynomials, which is just the localization $\Lambda[e_n^{-1}]:=\k[h_1, \ldots, h_n][e_n^{-1}]$, where $e_n=h_1 h_2 \ldots h_n$ is an elementary symmetric polynomial. It is a classical result from invariant theory that the algebra of symmetric polynomials $\Lambda^{S_n}$ is again polynomial and $\Lambda$ is a free $\Lambda^{S_n}$-module of rank $n!$. This can be generalized to reflection groups, by the Chevalley-Shephard-Todd Theorem. The same holds for Laurient polynomials:

\begin{lemma}\label{Laurient}
$\Lambda[e_n^{-1}]$ is a free $\Lambda[e_n^{-1}]^{S_n}$-module of rank $n!$. More generally, if $G \leq S_n$ is a reflection group, $\Lambda[e_n^{-1}]$ is a free $\Lambda[e_n^{-1}]^{G}$-module of rank $|G|$.
    
\end{lemma}
\begin{proof}
Since $e_n$ is $S_n$ invariant, $\Lambda[e_n^{-1}]^{S_n} \simeq \Lambda^{S_n}[e_n^{-1}]$. Since localization is an exact functor, we are done. The general case follows in the same way, by Chevalley-Shephard-Todd Theorem.
\end{proof}

\begin{theorem}
    Let $D(a,\sigma)$ be a tensorial GWA of rank n with $D$ a polynomial algebra in n indeterminates or an algebra of Laurient polynomials. If $G\leq S_n$ is a permutation group and we give $D(a,\sigma)^G$ the structure of a principal Galois order as in Theorem \ref{permutation}; or if we consider $D(a,\sigma)^G$, where $G=G(m,p,n)$ as in Theorem \ref{main-objective}, then $D(a,\sigma)^G$ is a free module over its Harish-Chandra subalgebra.
\end{theorem}
\begin{proof}
    $D(a,\sigma)$ is a free module over its Harish-Chandra subalgebra $D$. Calling $\Gamma$ the Harish-Chandra subalgebra of $D(a,\sigma)^G$, either by Chevalley-Shephard-Todd Theorem or Lemma \ref{Laurient}, we have that $D$ is a projective $\Gamma$-module. The result now follows from Proposition \ref{Bass} and Lemma \ref{main-lemma-1}
\end{proof}

Now we discuss the Gelfand-Kirillov-Hypothesis

\begin{theorem}\label{GK-2}
    If $D(a,\sigma)$ is a GWA of rank I and classical type (cf. Corollary \ref{GWA-simplicity-2}), then $\Frac \, (D(a,\sigma)^n)^{G(m,p,n)} \simeq \Frac W_n(\k)$. In case it is a GWA of rank I of quanum type, $\Frac \, (D(a,\sigma)^n)^{G(m,p,n)} \simeq \Frac(\k_{q^{m/p}}[x,y] \otimes k_q[x,y]^{\otimes n-1})$
\end{theorem}
\begin{proof}
    In the first case, we have the skew field of fractions of $(\k(t_1,\ldots,t_n)*\Z^n)^{S_n}$, which is known to be $\Frac \, W_n(\k)$ (cf. \cite{FMO}, \cite{FS}). In the second case, it follows from \cite[Theorem 1.1]{H0}
\end{proof}

\section{Some concrete Harish-Chandra modules}

This section has three subsections. In the first one we show that the category $\mathcal{O}$ for rational Cherednik algebras \cite{GGOR}, in the case of the invariants of the Weyl algebra discussed above (i.e., when the parameters $\mathfrak{c}=0$ and $t=1$ and $G=G(m,p,n)$), is contained in the category of Harish-Chandra modules, when $\Gamma=\k[t_1,\ldots,t_n]$. This is well known for the case of Gelfand-Tsetlin modules for $\mathfrak{gl_n}$ \cite{DFO}, and was explored in the case of of groups of type $G(m,p,n)$ in \cite{LW}, relying heavily on the work \cite{BFN}. Our exposition, in the case of Weyl algebra, however, is completely elementary, and includes nice examples of Harish-Chandra modules .

The second subsection is an extension of the ideas of \cite{DFO} and \cite{DGO}, where we construct generic Harish-Chandra modules for certain invariants of simple GWAs.

The third subsection discuss in the detail this construction in the case of the Weyl algebras.

Before we start, we revisit the paper \cite{FS4} and offer a new proof of the Bernstein inequality of invariants of tensorial GWAs of classical or quantum type; and in particular, invariants of the Weyl algebra.

We then show that the modules in the following three subsections are all holonomic.

In the case of the Weyl algebra $W_n(\k)$, the Bernstein inequality is the well known statement that, if $M$ is a finitely generated module for it, then $\GK \, M \geq n$ (see, e.g., \cite[Chapter 8]{KL}). For tensorial simple GWAs of classical or quantum type and arbitrary rings of differential operators on smooth affine varieties, see \cite{FS4} and references therein.

The main result about their Gelfand-Kirillov dimensions, which we recall here, is:

\begin{theorem}
\begin{enumerate}
    \item Let $A$ be an affine regular domain of finite Krull dimension $n$ over a field of characteristic $0$ (such as $\mathcal{O}(X)$, $X$ an smooth affine variety of dimension $n$). Then for every finitely generated module over $\mathcal{D}(A)$, the Gelfand-Kirillov dimension is greater then or equal $n$.
    \item
    The same result holds if $A$ is simple tensorial GWA of classical or quantum type of rank $n$.
\end{enumerate}   
\end{theorem}

In all cases, a non-zero finitely generated module $M$ is called \emph{holonomic}, following the terminology of the Weyl algebra (cf. \cite{KL}), if it has minimmal Gelfand-Kirillov dimension $n$. In order to the holonomic modules to form an abelian subcategory of modules, we determine that $0$ is a holonomic module.

\begin{lemma}
    Let $A$ be the rank $n$ Weyl algebra or some of the algebras of previous Theorem. Let $G$ be any finite group of automorphisms of $A$. If $M$ is any finitely generated $A*G$ module, then $\GK \, M \geq n$.
\end{lemma}
\begin{proof}
    $M$ is, in particular, a finitely generated $A$-module, so the result follows from the usual Bernstein inequality for $A$.
\end{proof}

\begin{proposition}\emph{(Bernstein inequality)}
    Let $M$ be any finitely generated $A^G$-module, where $A$ is an algebra of previous Lemma and $G$ a finite group of outer automorphisms. Then $\operatorname{GK} \, M \geq n$
\end{proposition}
\begin{proof}
    A combination of Theorem \ref{essential-invariant}, Theorem \ref{Morita}, and the previous Lemma.
\end{proof}

\subsection{Category O modules are Harish-Chandra and holonomic}

In this section, $\k=\C$.

Let $(G,h)$ be a complex reflection group representation, with $G=G(m,p,n)$, $t=1$. Let $S$ be the set of complex reflections of $G$ and $\mathfrak{c}:S \rightarrow \C$ a function invariant with respect to $G$-conjugation.

With this data, we can construct an algebra called rational Cherednik algbra, introduced in \cite{EG}. When $\mathfrak{c}=0$, the rational Cherednik algebra is just $W_n(\C)*G$. So we can use theory of rational Cherednik algebras to study $W_n(\C)*G$-modules and then, by Morita equivalence, $W_n(\C)^G$-modules. As we will notice, the fact that $\mathfrak{c}=0$ makes the theory rather simple. In this section we follow \cite{Bellamy0} and \cite{Vale}.

We will use freely both the notations $W_n(\C)*G$ and $\mathcal{D}(h)*G$.

\begin{theorem}\label{PBW-Cherednik}
    \emph{(PBW)}
The multiplication map $\C[h] \otimes \C G \otimes \C[h^*] \rightarrow \mathcal{D}(h)*G$ is an isomorphism.    
\end{theorem}
\begin{proof}
    \cite{EG}.
\end{proof}

\begin{remark}
    Remember that $\C[h]= \operatorname{Sym} \, h^*$ and $\C[h^*]= \operatorname{Sym}\, h$.
\end{remark}




\begin{definition}
    The category $\mathcal{O}$ for $W_n(\C)*G$ consists of the full subcategory of $W_n(\C)*G$-modules consisting of finitely generated modules $M$ where $h$ acts locally nilpotently: $y \in h$ and $m \in M$ implies that for $N>>0$, $y^N m=0$.
\end{definition}
 This is, more precisely, $\mathcal{O}_{W_n(\C)*G}(0)$, in the block decomposition of the category discussed in Theorem \ref{O-H-C} (cf. \cite{Berest}).

\begin{proposition}
    Category $\mathcal{O}$ is an abelian Jordan-Hölder subcategory.
\end{proposition}
\begin{proof}
    \cite[Proposition 1.24]{Vale}.
\end{proof}

\begin{definition}
    Let $\operatorname{Irrep}(G)$ denotes the isoclass of irreducible finite dimensional $G$-modules. Let $\tau \in \operatorname{Irrep}(G)$. We can make it a $\C[h^*]*G$-module by declaring that $h$ acts as 0. The standard module $M(\tau)$ is

    \[ M(\tau)=W_n(\C)*G \otimes_{\C[h*]*G} \tau \]
\end{definition}

\begin{lemma}\label{xyz} 
As a vector space, $M(\tau)=\C[h] \otimes \tau$.
\end{lemma}

\begin{proposition}
    $M(\tau)$ has a unique irreducible quotient $L(\tau)$. $\{ M(\tau), L(\tau) \}_{\tau \in \operatorname{Irrep}(G)}$ belongs to category $\mathcal{O}$, and $\{ L(\tau) \}_{\tau \in \operatorname{Irrep}(G)}$ accounts for all its irreducible objects.
\end{proposition}
\begin{proof}
\cite[Proposition 1.23]{Vale}
\end{proof}

\begin{theorem}
    Category $\mathcal{O}$ is semisimple and for each $\tau \in \operatorname{Irrep}(G)$ we have $M(\tau)=L(\tau)$.
\end{theorem}
\begin{proof}
    \cite[Theorem 2.1, Lemma 2.2]{Vale}, and now it is known that the hypothesis of this proof, that the dimension of the Hecke algebra is $|G|$, is always true, for all $G$ \cite{Etingof}.
\end{proof}
.






\begin{theorem}\label{O-is-Harish-Chandra}
   $M(\tau)$ is a Harish-Chandra module for $W_n(\C)*G$.
\end{theorem}
\begin{proof}
    As a vector space, $M(\tau)$ is isomorphic to $\C[h] \otimes \tau$.

    The Harish-Chandra subalgebra for $W_n(\C)*G$ is $\Gamma= \C[t_1,\ldots,t_n]$, $t_i=y_i x_i$, $y_1, \ldots, y_n$ a base for $h$ and $x's$ a dual basis in $h^*$. We can write $\Gamma=\C[u_1,
\ldots,u_n], \, u_i = x_i y_i,$ for $t_i-u_i=1$ and so the algebras coincide.
    Let's see how each $u_i$ acts on $M(\tau)$. So let $p(x) \otimes m \in M(\tau), \, p(x) \in \C[h], m \in \tau$.

    Write $p(x)= \sum_\alpha c_\alpha x^\alpha, \, c_\alpha \in \C$.

\[ [x_i y_i, x_j]=0, i \neq j; \, [x_i y_i,x_i]=x_i[y_i,x_i]+[x_i,x_i]y_i=x_i.\]


 \[ u_i.\big(p(x) \otimes m\big)= [u_i,p(x)] \otimes m + p(x) \otimes u_i.m=[u_i,p(x)] \otimes m.\]

  since $h$ acts by zero on $\tau$. In particular, \[u_i.(x_1^{\ell_1}x_2^{\ell_2}\cdots x_n^{\ell_n}\otimes\tau)=\ell_i \cdot x_1^{\ell_1}x_2^{\ell_2}\cdots x_n^{\ell_n}\otimes\tau.\]
 
\medskip

We have \[ M(\tau)=  \bigoplus_{ (\ell_1,\ldots,\ell_n) \in \mathbb{N}^n} \C x_1^{\ell_1}x_2^{\ell_2}\ldots x_n^{\ell_n} \otimes \tau \]

 Call $\bar{\ell}=(\ell_1,\ldots,\ell_n)$, and $\chi_{\bar{\ell}}: \Gamma \rightarrow \C$ the character of $\Gamma$ that sends each $u_i$ to ${\ell_i}$. Then $\Gamma$ acts with character $\chi_{\bar{\ell}}$ in $\C x_1^{\ell_1}x_2^{\ell_2}\ldots x_n^{\ell_n} \otimes \tau$.

\medskip

Hence \[ M(\tau)=  \bigoplus_{ (\ell_1,\ldots,\ell_n) \in \mathbb{N}^n} \C x_1^{\ell_1}x_2^{\ell_2}\ldots x_n^{\ell_n} \otimes \tau \]

is diagonalizable for the action of $\Gamma$. Hence the module is Harish-Chandra.

    \end{proof}
    
\begin{corollary}
   $M(\tau)^{G}$ is an irreducible Harish-Chandra module for $W_n(\C)*G$
\end{corollary}
\begin{proof}
    It follows from the previous Theorem and Theorems \ref{essential-invariant}, \ref{Morita2} and \ref{Morita-HC}.
\end{proof}


We can now offer a simple proof of the well known fact that every module in the category $\mathcal{O}$ for $W_n(\C)*G$ is holonomic. By Morita equivalence, a similar statement holds for $W_n(\C)^G$

\begin{proof}
    Since the category is semisimple and Jordan-Hölder, $M \in \mathcal{O}$ is a direct sum of a finite number of irreducible modules. By \cite[Proposition 5.1a)]{KL}, it is enough to show that each $L(\tau)=M(\tau)$ is holonomic. Let $W$ be a finite dimensional vector space that generates $M(\tau)$ as a $W_n(\C)*G$-module. Without loss of generality, we can assume $W$ of the form $U \otimes \tau$, where $U$ is a finite dimensional vector space that generates $\C[h]$ as a module over itself.
    Ler $V$ be a frame of $W_n(\C)*G$. There is no loss in generality assuming $V$ of the form $V= V' \otimes 1 \otimes \C G + 1 \otimes V'' \otimes \C G$, $V'$ a frame for $\C[h]$, $V''$ a frame for $\C[h^*]$, where we are using Theorem \ref{PBW-Cherednik}. $V^n W=V^{'n}.U \otimes \tau + U \otimes \tau$. It is clear then that the growth of the function  $d_{V, W}(n)$ is the same as $d_{V', U}(n)$ (see \cite[Proposition 8.1.7]{McConnell} and \cite[Proposition 2.1b0]{KL}). Since

    \[ \limsup_{n \to \infty} \, \log_n (d_{V',U}(n))=n,\]

\medskip

    $GK \, M(\tau)=n$, and hence every module in category $\mathcal{O}$ is holonomic.
    
\end{proof}

We have seen that the algebra $W_n(\C)*G$ has two natural Harish-Chandra subalgebras: $\C[h^*]$ (Theorem \ref{O-H-C}) and $\Gamma=\C[t_1, \ldots, t_n]$. So we can consider two module categories: $\mathcal{O}_{W_n(\C)*G}=\mathbb{H}(W_n(\C)*G, \C[h])$ and $\mathbb{H}(W_n(\C)*G,\C[t_1,\ldots,t_n])$, the which arises from the flag order realization of $W_n(\C)*G$ when $G=G(m,p,n)$. We have that the category $\mathcal{O}$ of \cite{GGOR} belongs to both.

\begin{problem}
    What is, precisely, the relation between $\mathcal{O}_{W_n(\C)*G}$ and the Harish-Chandra modules for $W_n(\C)*G$ with respect to $\Gamma$?
\end{problem}

\subsection{Generic modules for fixed rings of simple GWA}

We begin this section  with the assumption that $D$ is an affine commutative domain over an algebraically closed field $\k$ of characteristic 0, $D(a, \sigma)$ a rank n GWA.

\begin{lemma}
    $D(a,\sigma)*G$, where $G$ is a finite group of algebra automorphisms, has $D$ as Harish-Chandra subalgebra
\end{lemma}

 Suppose now that $D(a,\sigma)$ is the tensor product, $n$, times, of the same GWA of rank 1, and restrict to the case where $G\leq S_n$ acts on $D(a,\sigma)$ by permutations, like before:  $\pi X_i^{\pm}=X_{\pi(i)}^\pm; \pi (d_1 \otimes \ldots \otimes d_n) =(d_{\pi(1)} \otimes \ldots \otimes d_{\pi(n)}) $. The algebra $D(a,\sigma)*G$ has generators $D$, $X_i^+, X_i^-$, $i=1, \ldots, n$, and $g \in G$. The relations are the same as in the Definition \ref{def-GWA}, plus the two additional relations:

\[ \pi X_i^\pm \pi^{-1}= X_{\pi(i)}^\pm, i=1,\ldots,n, \pi \in G, \]
\[ \pi (d_1 \otimes \ldots \otimes d_n) \pi^{-1} =d_{\pi(1)} \otimes \ldots \otimes d_{\pi(n)}. \]

Given a maximal ideal of $D$, $\m$, denote by $f_\m:D/\m \rightarrow \k$ the canonical $\k$-algebra isomorphism. Let $\phi$ be any $\k$-algebra automorphism of $D$. 
Then it is easy to see that:

\[ (\dagger) \, f_{\phi^{-1} \m}(a+\phi^{-1}\m)= f_\m(\phi(a)+\m),\qquad\forall a\in D.\]

\medskip

Fix a maximal ideal $\m$ of $D$. Consider the symbols $T(\theta(\m))$, where $\theta$ runs throught all the elements of the group of automorphisms of $D$ generated the $\sigma_i$, $i=1, \ldots, n$ and $G$. We call those $T(.)$ \emph{generalized tableaux}. Let $T_\m$ be the vector space with basis the generalized tableaux.

For each basis element $T(\n) \in T_\m$, where $\n$ is an ideal of the form $\theta(\m)$, define the following linear actions:

 \[ z.T(\n)=f_\n(z+\n)T(\n); \]
 \[ X_i^+.T(\n) = T(\sigma_i \n); \]
 \[ X_i^- T(\n) = f_{\sigma_i^{-1}  \n}(a_i+\sigma_i^{-1}\n)T(\sigma_i^{-1} \n),\]
 \[ \pi .T(\n)= T(\pi \n)\]
 \[ z \in D, X_i^+, X_i^- \in D(a , \sigma), i=1,\ldots, n, \pi \in G \leq S_n.\] 

 \medskip

We need to check that the operators defined above satisfy the defining relations of $D(a,\sigma)*G$. This is easy to do, by repeated application of $(\dagger)$:
\begin{itemize}

\item $X_i^+ z T(\n)= f_n(z+\n)X_i T(\n)= f_\n(z) T(\sigma_i \n)=f_{\sigma_i \n}(\sigma_i(z) +\sigma_i(\n))T(\sigma_i(\n))= \sigma_i(z)X_i^+ T(\n),$
\item $X_i^- z T(\n) = f_\n(z+\n) f_{\sigma_i^{-1} n}(a_i+\sigma_i^{-1} \n) T(\sigma_i^{-1} \n) = $ 
\[ f_{\sigma_i^{-1} \n}(\sigma_i^{-1} z + \sigma_i^{-1} \n) f_{\sigma_i^{-1} \n}(a_i + \sigma_i^{-1} \n) T(\sigma_i^{-1} n)= \sigma_i^{-1} (z) X_i^- T(\n), \]
\item  $X_i^- X_i^+ T(\n)= f_{\n}(a_i+\n) T(\n)= a_i T(\n),$

\item $X_i^+ X_i^- T(\n)= f_{\sigma_i^{-1}\n}(a_i+\sigma_i^{-1}\n)T(\n)=f_\n(\sigma_i(a_i)+\n)T(\n)= \sigma_i(a_i) T(\n)$

\item $\pi X_i^+ \pi^{-1} T(\n)=T(\pi \sigma_i \pi^{-1} \n)=T(\pi \n)=X_{\pi(i)}^+ T(\n),$

\item $\pi X_i^-  \pi^{-1} T(\n)= f_{\sigma_i \pi^{-1} \n}(a_i + \sigma_i \pi^{-1} \n) \pi T(\sigma_i \pi^{-1} \n)=$
\[f_{ \pi \sigma_{\pi(i)} \pi^{-1} \n}(a_{\pi(i)} + \pi \sigma_i \pi^{-1} \n) T(\pi \sigma_i \pi^{-1} \n)= f_{\sigma_{\pi(i)}\n}(a_{\pi(i)} +\sigma_{\pi(i)} \n)T(\sigma_{\pi(i)} \n )= X_{\pi(i)}^- T(\n).  \]

\end{itemize}

\begin{theorem}\label{gentableaux}
For each $\m \in \operatorname{Specm} \, D$, there exists an irreducible $D(a, \sigma)*G$-module $T_\m$ which is a Harish-Chandra module with $\m$ in its support. If $D(a,\sigma)$ is a simple tensorial GWA of classical of quantum type, it is holonomic. In this case $T_\m^G$ is a simple Harish-Chandra module for $D(a,\sigma)^G$ that is also holonomic.
\end{theorem}
\begin{proof}
The fact that the module is $T_\m$ irreducible and has $\m$ in its support is clear by construction. The statement about $T_\m^G$ follows from  Theorems \ref{Morita} and \ref{Morita-HC}. $T(\m)$ as a single vector generates $T_m$. Consider the analogues of the Bernstein filtration $\mathfrak{B}=\{  F_n \}_{n \geq 0}$ for those generalized Weyl algebras (\cite[pp. 6]{FS4}). Let $\mathfrak{M}$ be $\langle \sigma_1, \ldots, \sigma_n \rangle \rtimes G \simeq \mathbb{Z} \rtimes G$, and let $d_n = \operatorname{dim} F_n T(\m)$. Since $\mathfrak{M}$ acts transitively on the generalized tableaux basis of $T_\m$, $\limsup_{n \to \infty} \log_n d_n = \operatorname{growth} \, \mathfrak{M} =n$, is also the value of $\GK \, T_m$. So we are done.
\end{proof}

\subsection{Generic modules for Weyl algebra invariants}

In this subsection we give a down-to-earth example of the construction in the previous section. We will consider modules for $W_n(\k)*{\mathcal{A}_n}$, for simplicity. All the modules in this section, despite very simple, are new.

Let $\mathfrak{L}= \operatorname{Specm} \, \Gamma$. By the Hilbert's Nullstellensatz, every element of $ \lambda \in \mathfrak{L}$ has the form $(t_1-\lambda_1, \ldots, t_n - \lambda_n), \lambda_i \in \k$. We, then, \emph{identify} $\lambda \in \mathfrak{L}$ with an $n$-tuple in $\k^n$, $(t_1-\lambda_1, \ldots, t_n - \lambda_n) \equiv (\lambda_1, \ldots, \lambda_n)$.

Let $\lambda \in \mathfrak{L}$. Let $\delta_i$ be the $n$-tuple with $1$ in entry $i$ and $0$ in all others. Calling $\varepsilon_1, \ldots, \varepsilon_n$ the standard basis of $\mathbb{Z}^n$, they act on $\lambda \in \mathfrak{L}$ as follows: $\varepsilon_i.\lambda=\lambda+\delta_i$.

We can also make $\mathcal{A}_n$ act on $\lambda$: if $\pi \in A_n$, $\pi.(\lambda_1,\ldots,\lambda_n)=(\lambda_{\pi(1)}, \ldots, \lambda_{\pi(n)})$. Hence the whole group $\mathfrak{M}:= \mathbb{Z}^n \rtimes \mathcal{A}_n$ acts on $\mathfrak{L}$.

Let $\beta \in \mathfrak{L}$ and $\mathfrak{B}$ be the orbit of $\beta$ in $\mathfrak{L}$ under the action of the group $\mathfrak{M}$. Let $T_\beta$ be the vector space with basis symbols of the form $T(\lambda)$, where $\lambda \in \mathfrak{B}$. The $W_n(\k)*A_n$-module structure on $T_\beta$, following the construction for Theorem \ref{gentableaux}, can be given quite explicitly:

\[ x_i . T(\lambda)= T(\lambda+ \delta_i), \]

\[ y_i . T(\lambda) = \lambda_i T(\lambda - \delta_i)\, \]

\[ \pi. T(\lambda)=T(\pi(\lambda)). \]

The next two results are restaments of Theorem \ref{gentableaux}, remembering that, by \cite[Theorem 5]{Levasseur}, $W_n(\k)^{\mathcal{A}_n} \simeq \mathcal{D}(\k^n/\mathcal{A}_n)$, as the alternating group contains no trivial reflections.

\begin{theorem}
    $\operatorname{GKdim} \, T_\beta=n$. That is, the module is holonomic.
\end{theorem}

\begin{corollary}
    $T_\beta^{A_n}$ is a holonomic module for $W_n(\k)^{A_n} \simeq \mathcal{D}(\k^n/A_n)$.
\end{corollary}

\section*{Acknowledgments}
The author would like to acknowledge the important role played by J. T. Hartwig, E. C. Jauch and V. Futorny in writing this paper, for many fruitful discussions, and also M. Lorenz, for his explanations about reflection groups in multiplicative invariant theory, P. Etingof, for some discussions related to rational Cherednik algebras, Ken Goodearl for presenting the author relevant papers on noncommutative lying over the related class of FCR rings, Santanu Tantubay for discussion of classical and recent results about Harish-Chandra modules for the Virasoro algebra, the Witt algebra, and other infinite dimensional simple Lie algebras of vector fields due to E. Cartan, and Dylan Fillmore for the discussion of many of the proofs of this manuscript. Most of this work was made at Shenzhen International Center for Mathematics in SUSTech, China, and the author would like to stress the outstanding research conditions, and all the faculty, post-docs and students that contribute to a very friendly atmosphere. In particular, the author would like to stress the kindness and helpfulness of our chinese staff: Qunwang Zhang, Zhiwang Fu, Weiyu Chen and Jialing Tang.




\begin{thebibliography}{MMM}


\bibitem{AD}
{\sc J. Alev, and F. Dumas},
{\em Opérateurs différentiels invariants et problème de Noether}.
In: Bernstein, J., Hinich, V., Melnikov, A. (eds) Studies in Lie Theory. Progress in Mathematics, vol. 243, Birkhäuser Boston, Boston, MA, 2006, pp. 21--50.

\bibitem{Bass}
{\sc H. Bass},
{\em Big projective modules are free},
Illinois J. Math {\bf 7} (1963) 24--31.

\bibitem{Bavula}
{\sc V. Bavula},
{\em Generalized Weyl algebras and their representations},
Algebra i Analiz {\bf 4} (1992)
75--97. English translation: St. Petersburg Math. J. {\bf 4} (1993) 71--92.


\bibitem{BF} 
{\sc V. Bavula},
{\em Filter dimension of algebras and modules, a simplicity criterion for generalized Weyl algebras},
Comm. Algebra {\bf 24}(6) (1996) 1971--1992.

\bibitem{BH}
{\sc V. Bavula and V. Hinchcliffe},
{\em Morita invariance of the filter dimension and of the inequality of Bernstein},
Algebr. Represent. Theory {\bf 11}(5) (2008) 497–504.



\bibitem{BBF}
{\sc V. Bekkert, G. Benkart, and V. Futorny}, {Weight modules for Weyl algebras}, Kac-Moody Lie algebras and related topics, Contemp. Math., vol. 343, Amer. Math. Soc., Providence, RI, 2004, pp. 17 – 42.

\bibitem{BavulaK}
{\sc V. Bavula and F. van Oystaeyen},
{\em Simple holonomic modules over the second Weyl algebra $A_2$},
Adv. Math. 150(1) (2000) 80--116.


\bibitem{Bellamy0}
{\sc G. Bellamy},
{\em Symplectic reflection algebras}, Noncommutative algebraic geometry, 167--238. Math. Sci. Res. Inst. Publ., vol. 64, Cambridge Univ. Press, New York, 2016.




\bibitem{Berest}
{\sc Y. Berest, P. Etingof, and V. Ginzburg}, {\em Cherednik algebras and differential operators on quasi-invariants},
Duke Math. J. {\bf 118}(2) (2003) 279--337.

\bibitem{BEG}
{\sc Y. Berest, Pavel Etingof, Victor Ginzburg},
{\em Finite-dimensional representations of rational Cherednik algebras
},
IMRN, V. 2003  (19), 2003, 1053–1088, 

\bibitem{Billig}
{\sc Y. Billig, V. Futorny}, {\em Classification of irreducible representations of Lie algebra of vector fields on a torus}, J. Reine Angew. Math., 720(2016), 199-216.

\bibitem{Billig2}
{\sc Y. Billig, V. Futorny, J. Nilsson}, {\em Representations of Lie algebras of vector fields on affine varieties}, Isr. J. Math. 233 (1) (2019) 379–399.




\bibitem{Block}
{\sc R. E. Block},
{\em The irreducible representations of the Lie algebra $\mathfrak{sl}(2)$ and of the Weyl algebra},
Adv. Math. {\bf 39}(1) (1981) 69--110.

\bibitem{BR}
{\sc E. Bouaziz, H. Rocha},
{\em Annihilators of AV-modules and differential operators},
J. Algebra 636 (2023), 869–887.


\bibitem{Bourbaki}
{\sc N. Bourbaki},
{\em Éléments de mathématiques: Groupes et algèbres de Lie}, Ch. IV--VI, Herman, Paris, 1968.






\bibitem{BFN} {\sc A. Braverman, M. Finkelberg, H. Nakajima}, {\em Towards a mathematical definition of Coulomb
branches of 3-dimensional N=4 gauge theories}, II. Adv. Theor. Math. Phys. 22 (2018), no. 5,
1071-1147.

\bibitem{BMR} {\sc R. Bezrukavnikov, I. Mirković, D. Rumynin}, {Localization of modules for a semisimple Lie algebra in prime characteristic.
With an appendix by Bezrukavnikov and Simon Riche},
Ann. of Math. (2) 167 (2008), no. 3, 945–991.


\bibitem{Brown}
{\sc K.  Brown and K. R. Goodearl},
{\em Lectures on Algebraic Quantum Groups}, Advanced Courses in Mathematics, CRM Barcelona, Birkhäuser Verlag, Basel, 2002.
\bibitem{BY}
{\sc K. A. Brown, M. T. Yakimov},
{\em Azumaya loci and discriminant ideals of PI algebras}, Adv. Math. 340, 1219-1255 (2018).







\bibitem{DK}
{\sc H. Derksen, G. Kemper},
{\em Computational invariant theory},
Encyclopaedia of Mathematical Sciences 130. Invariant Theory and Algebraic Transformation Groups 1. Berlin: Springer.

\bibitem{Disch}
{\sc J. Disch},
{\em
Generic Gelfand-Tsetlin modules of quantized and classical orthogonal algebras},
Journal of Algebra,
Volume 620,
2023,
Pages 157-193.

\bibitem{Dixmier}
{\sc J. Dixmier},
{\em Enveloping algebras},
Grad. Stud. Math., vol. 11, American Mathematical Society, Providence, RI, 1996.

\bibitem{Drensky}
{\sc V. Drensky and E. Formanek},
{\em Polynomial identity rings},
Adv. Courses Math. CRM Barcelona, Birkhäuser Verlag, Basel, 2004.

\bibitem{DFO0}
{\sc Yu. A. Drozd, S. A. Ovsienko, and V. M. Futorny},
{\em On Gel'fand-Zetlin modules},
Proceedings of the Winter School on Geometry and Physics (Srní, 1990), Rend. Circ. Mat. Palermo (2) Suppl. {\bf 26} (1991) 143--147.

\bibitem{DFO}
{\sc Yu. A, Drozd, V. M. Futorny, and S. A. Ovsienko},
{\em Harish-Chandra Subalgebras and Gel'fand-Zetlin Modules}. In: Finite-dimensional Algebras and Related Topics (Ottawa, ON, 1992), 79--93.
NATO Adv. Sci. Inst. Ser. C: Math. Phys. Sci., vol. 424,
Kluwer Academic Publishers Group, Dordrecht, 1994.

\bibitem{DGO}
{\sc Y. Drozd, B. L. Guzner, S. Ovsienko},
{\em Weight modules over generalized Weyl algebras},
J. Algebra   184 (1996), no. 2, 491–504.

\bibitem{Dumas}
{\sc F. Dumas},
{\em An introduction to noncommutative polynomial invariants}. Lecture notes from the conference ``Homological methods and representations of non-commutative algebras'', Mar del Plata, Argentina March 6--17, 2006.\\
\href{https://lmbp.uca.fr/$\sim$fdumas/recherche.html}{https://lmbp.uca.fr/$\sim$fdumas/recherche.html}


\bibitem{EFOS}
{\sc F. Eshmatov, V. Futorny, S. Ovsienko, and J. Schwarz},
{\em Noncommutative Noether’s problem
for complex reflection groups},
Proc. Amer. Math. Soc. {\bf 145}(12) (2017) 5043--5052.

\bibitem{Etingof}
{\sc P. Etingof},
{Proof of the Brou\'e–Malle–Rouquier Conjecture in Characteristic Zero (After I. Losev and I. Marin—G. Pfeiffer)},
Arnold Math J. 3, 445–449 (2017).


\bibitem{EG}
{\sc P. Etingof and V. Ginzburg},
{\em Symplectic reflection algebras, Calogero-Moser space, and deformed Harish-Chandra homomorphism},
Invent. Math. {\bf 147}(2) (2002) 243--348.



\bibitem{FLS}
{\sc D. R. Farkas, G. Letzter, L. W. Small}, {\em Generalized reductive algebras and a quantum example},
Pacific J. Math. 221 (2005), no. 1, 29–48.

\bibitem{Fillmore}
{\sc D. Fillmore},
{\em On the Category of Harish-Chandra Block Modules}, 
J. Algebra {\bf 658} (2024) 888--924.


\bibitem{Proceedings}
{\sc V. Futorny}, {\em Representations of Galois algebras},
World Scientific Publishing Co. Pte. Ltd., Hackensack, NJ, 2018, 1303–1320.


\bibitem{FGM}
{\sc V. Futorny, D. Grantcharov, V. Mazorchuk},
{\em Weight modules over infinite dimensional Weyl algebras},
Proc. Am. Math. Soc. {\bf 142}, No. 9, 3049-3057 (2014).

\bibitem{FGRZ}
{\sc V. Futorny, D. Grantcharov,  L. E.  Ramirez,  P. Zadunaisky}, 
{\em Gelfand-Tsetlin theory for rational Galois algebras},
Israel J. Math. {\bf 239 (2020)}, no. 1, 99–128.

\bibitem{FHTGWA}
{\sc V. Futorny and J. T. Hartwig},
{\em On the consistency of twisted generalized Weyl algebras},
Proc. Am. Math. Soc. {\bf 140}, No. 10, 3349-3363 (2012).

\bibitem{FMO}
{\sc V. Futorny, A. Molev, and S. Ovsienko},
{\em The Gelfand-Kirillov conjecture and Gelfand-Tsetlin modules for finite W-algebras},
Adv. Math. {\bf 223}(3) (2010) 773--796.

\bibitem{FO}
{\sc V. Futorny and S. Ovsienko},
{\em Galois orders in skew monoid rings},
J. of Algebra {\bf 324} (2010) 598--630.

\bibitem{FO2}
{\sc V. Futorny and S. Ovsienko},
{\em Fibers of characters in Gelfand-Tsetlin categories},
Trans. Amer. Math. Soc. {\bf 366}(8) (2014) 4173--4208.


\bibitem{FS0}
{\sc V. Futorny and J. Schwarz},
{\em Galois orders of symmetric differential operators}, 
Algebra Discrete Math. {\bf 23}(1) (2017) 35--46.

\bibitem{FS}
{\sc V. Futorny and J. Schwarz},
{\em Noncommutative Noether's problem vs classic Noether's problem},
Math. Z. {\bf 295}(3--4) (2020) 1323--1335.

\bibitem{FS2}
{\sc V. Futorny and J. Schwarz},
{\em Quantum linear Galois orders},
Comm. Algebra {\bf 47}(12) (2019) 5361--5369.

\bibitem{FS3}
{\sc V. Futorny and J. Schwarz},
{\em Algebras of invariant differential operators},
J. Algebra {\bf 542} (2020) 215--229.

\bibitem{FS4}
{\sc V. Futorny and J. Schwarz},
{\em Holonomic modules for rings of invariant differential operators},
Internat. J. Algebra Comput. {\bf 31}(4) (2021) 605--622.




\bibitem{Gaddis} {\sc J. Gaddis}, {\em The Weyl algebra and its friends: a survey}, arXiv:2305.01609[math.RA].




\bibitem{Ginzburg2} 
{\sc V. Ginzburg}, {\em On primitive ideals}, Sel. math., New ser. 9, 379–407 (2003).

\bibitem{GGOR}
{\sc V. Ginzburg, N. Guay, E. Opdam, R. Rouquier},
{\em On the  category O for rational Cherednik algebras},
Invent. Math. {\bf 154} (2003), no. 3, 617651.




\bibitem{H0}
{\sc J. T. Hartwig},
{\em The $q$-difference Noether problem for complex reflection groups and quantum OGZ algebras},
Comm. Algebra {\bf 45}(3) (2017) 1166--1176.

\bibitem{Hartwig}
{\sc J. T. Hartwig},
{\em Principal Galois orders and Gelfand-Zeitlin modules},
Adv. Math. {\bf 359} (2020), 106806, 23pp.

\bibitem{Hartwig2}
{\sc J. T. Hartwig},
{\em Harish-Chandra modules over Hopf Galois orders},
Int. Math. Res. Not. IMRN {\bf 21} (2023) 18273--18301.

\bibitem{Hartwig3}
{\sc J. T. Hartwig},
{\em Galois order realization of noncommutative type D Kleinian singularities}, arXiv preprint.\\
\href{https://arxiv.org/abs/2406.20012}{arXiv:2406.20012 [math.RT]}.



\bibitem{Hotta}
{\sc R. Hotta, K. Takeuchi and T. Tanisaki},
{\em $D$-modules, perverse sheaves, and representation theory},
Progr. Math., vol. 236, Birkhäuser Boston, Inc., Boston, MA, 2008.

\bibitem{Humphreys}
{\sc J. E. Humphreys},
{\em Representations of Semisimple Lie Algebras in the BGG Category $\mathscr{O}$}, Grad. Stud. Math., vol. 94,
American Mathematical Society, Providence, RI, 2008. 

\bibitem{Jacobson}
{\sc N. Jacobson},
{\em Basic Algebra II},
W. H. Freeman and Company, New York, 1989.

\bibitem{Jantzen} 
{\sc J. C. Jantzen},
{\em Einhüllende Algebren halbeinfacher Lie-Algebren},
Ergeb. Math. Grenzgeb. vol. 3, Springer-Verlag, Berlin, 1983.

\bibitem{Joel}
{\sc J. Kamnitzer, P. Tingley, B. Webster, A. Weekes, and O. Yacobi},
{\em On category $\mathcal{O}$ for affine Grassmannian slices and categorified tensor products},
Proc. Lond. Math. Soc. (3)  119(5) (2019) 1179--1233.

\bibitem{Jauch}
{\sc E. C. Jauch,}
{\em An extension of $U(\mathfrak{gl}_n)$ related to the alternating group and Galois orders},
J. Algebra {\bf 569} (2021) 568--594.\\

\bibitem{Jauch2}
{\sc E. C. Jauch,}
{\em Maps between standard and principal flag orders},
arXiv preprint.\\
\href{https://arxiv.org/abs/2208.13117}{arXiv:2208.13117 [math.RT]}.


\bibitem{JW}
{\sc D. A. Jordan and I. E. Wells},
{\em Invariants for automorphisms of certain iterated skew polynomial rings},
Proc. Edinburgh Math. Soc. {\bf 39}(3) (1996) 461--472.

 
\bibitem{lots}
{\sc J. Kamnitzer, B. Webster, A. Weekes, O. Yacobi},
{\em Lie algebra actions on module categories for truncated shifted Yangians},
Forum Math. Sigma  12 (2024), Paper No. e18, 69 pp.




\bibitem{Lam}
{\sc T. Y. Lam},
{\em A first course in noncommutative rings},
Grad. Texts in Math., vol. 131,
Springer-Verlag, New York, 2001.

\bibitem{KL}
{\sc G. R. Krause and T. H. Lenegan},
{\em Growth of Algebras and Gelfand-Kirillov Dimension},
Grad. Stud. Math., vol. 22, American Mathematical Society, Providence, RI, 2000.

\bibitem{KS}

{\sc H. P. Kraft and L. W. Small}, {\em Invariant algebras and completely reducible representations}, Math. Research Lett. 1 (3), 1994, 297-307.

\bibitem{KS2}
{\sc E. E. Kirkman and L. W. Small}, {\em Examples of FCR-algebras}, Comm.
Algebra 30:7 (2002), 3311–3326. 

\bibitem{KSW}
{\sc H. P. Kraft, L. W. Smalll and N. R. Wallach}, {\em Properties and examples of FCR-algebras}, Manuscripta math., 104, 443–450 (2001).


\bibitem{Lam2}
{\sc T. Y. Lam},
{\em Lectures on modules and rings}
Grad. Texts in Math., vol. 189, Springer-Verlag, New York, 1999.



\bibitem{LW}
{\sc E. LePage and B. Webster},
{\em Rational Cherednik algebras of $G(\ell, p, n)$ from the Coulomb perspective},
Adv. Math. 433 (2023) Paper No. 109295, 49 pp.

\bibitem{LM} {\sc J. Lepowsky and G. McCollum}, {\em On the determination of irreducible modules by restriction to a
 subalgebra}, Transactions of the American Mathematical Society, 176 (1973), pp. 45–57.


\bibitem{Levasseur}
{\sc T. Levasseur},
{\em Anneaux d'opérateurs différentiels}, Paul Dubreil and Marie-Paule Malliavin Algebra Seminar, 33rd Year (Paris, 1980), pp. 157--173,
Lecture Notes in Math., vol. 867, Springer, Berlin, 1981.



\bibitem{Lorenz}
{\sc M. Lorenz},
{\em Multiplicative invariant theory},
Encyclopaedia Math. Sci., vol. 135,
Invariant Theory Algebr. Transform. Groups, VI
Springer-Verlag, Berlin, 2005.\\
\href{https://www.math.temple.edu/~lorenz/pubs.html}{https://www.math.temple.edu/$\sim$lorenz/pubs.html}


\bibitem{Mathieu}
{\sc O. Mathieu},
{\em Classification of Harish-Chandra modules over the
Virasoro Lie algebra}, Invent. math. 107, 225-234 (1992).



\bibitem{Mazorchuk}
{\sc V. Mazorchuk}, {\em On Gelfand-Zetlin modules over orthogonal Lie algebras}, Algebra Colloquium
8 (2001), 345–360.

\bibitem{MV}
{\sc V. Mazorchuk, E. Vishnyakova},
{\em Harish-Chandra modules over invariant subalgebras in a skew-group ring},
Asian J. Math. 25 (2021), no. 3, 431–454.

\bibitem{MT}
{\sc V. Mazorchuk, L. Turowska}, {\em Simple weight modules over twisted generalized Weyl algebras}, Comm. Algebra {\bf 27} (6) (1999)
2613–2625.

\bibitem{MT2}
{\sc V. Mazorchuk, L. Turowska},
{\em On Gelfand-Zetlin modules over Uq(gl n)},  Czechoslovak Journal of Physics 50, 139–144 (2000).

\bibitem{McConnell} 
{\sc J. C. McConnell and J. C. Robson}
{\em Noncommutative Noetherian rings},
Grad. Stud. Math., vol. 30,
American Mathematical Society, Providence, RI, 2001.






\bibitem{Molev}
{\sc A. Molev},
{\em Gelfand-Tsetlin bases for classical Lie algebras},
Hazewinkel, M. (ed.), Handbook of algebra. Volume 4. Amsterdam: Elsevier/North-Holland. Handbook of Algebra 4, 109-170 (2006).
\bibitem{Montgomery}
{\sc S. Montgomery},
{\em Fixed rings of finite automorphism groups of associative rings},
Lecture Notes in Math., vol. 818, Springer, Berlin, 1980.


\bibitem{MS}
{\sc S. Montgomery, L. W. Small},
{\em Fixed rings of noetherian rings},
Bull. London. Math. Soc. 13 1) (1981) 33--38.

\bibitem{MvdB}
{\sc I. M. Musson and M. Van den Bergh}, {\em Invariants under tori of rings of differential operators
and related topics}, Mem. Amer. Math. Soc. 136 (1998), no. 650.


\bibitem{NoetherX}
{\sc E. Noether},
{\em Der Endlichkeitssatz der Invarianten endlicher Gruppen}, Math. Ann. Vol.77 (1915), pp. 89-92.

\bibitem{OV}
{\sc A. Okunkov, A. Vershik}, {\em A new approach to representation theory of symmetric groups},
Selecta Math. New Series, 2 (1996), 581-605.


\bibitem{Passman2}
{\sc D. S. Passman},
{\em Infinite crossed products},
Pure Appl. Math., vol. 135,
Academic Press, Inc., Boston, MA, 1989.





\bibitem{Rowen} {\sc L. H. Rowen}, {\em Ring theory}, Vol. I. Pure Appl. Math., 127 Academic Press, Inc., Boston, MA, 1988.



\bibitem{SchwarzPan}
{\sc J. Schwarz},
{\em Generalizations of noncommutative Noether's problem},
Volume 229, Issue 2, February 2025, 107896.

\bibitem{Serre}
{\sc J.-P. Serre}, {\em Groupes finis d’automorphismes d’anneaux locaux r\'eguliers}, Colloque d’Algèbre ENSJF,
Paris, 8:1–11, 1967.














\bibitem{SW}
{\sc S. J. Sierra, C. Walton},
{\em The universal enveloping algebra of the Witt algebra is not noetherian}, Advances in Mathematics
Volume 262, 10 September 2014, Pages 239-260.

\bibitem{Vale}
{\sc R. Vale},
{\em On category $\mathcal{O}$ for the rational Cherednik algebra of the complex reflection group $(\mathbb{Z}/\ell\mathbb{Z}) \wr S_n$},
PhD Thesis, University of Glasgow (2006).

\bibitem{Webster}
{\sc B. Webster},
{\em Gelfand-Tsetlin modules in the Coulomb context},Ann. Represent. Theory 1, No. 3, 393-437 (2024).

\bibitem{WWY} 
{\sc B. Webster, A. Weekes, and O. Yacobi},
{\em A quantum Mirković-Vybornov isomorphism},
Represent. Theory {\bf 24} (2020) 38--84.










\bibitem{Zhelobenko}
{\sc D. P. Zhelobenko},
{\em Compact Lie Groups and Their Representations}, Nauka, Moscow, 1970; Transl. Math. Monogr., vol. 40,
Amer. Math. Soc., Providence, RI, 1973.



\end{thebibliography}
\end{document}